\documentclass[12pt]{amsart}
\usepackage[cp1251]{inputenc} 
\usepackage[english]{babel}
\usepackage{amsthm,amssymb,amsmath,amsfonts,mathrsfs, mathtools}
\usepackage{amsmath}
\pagestyle{plain}
\usepackage{geometry}
   \geometry{verbose,tmargin=1.5in,bmargin=1.5in,lmargin=1.5in,rmargin=1.5in}

\makeatletter
\newcommand*{\rom}[1]{\expandafter\@slowromancap\romannumeral #1@}
\makeatother

\makeatletter
\@addtoreset{equation}{section}
\makeatother

\newtheorem{theorem}[equation]{Theorem}
\newtheorem{proposition}[equation]{Proposition}
\newtheorem{lemma}[equation]{Lemma}
\newtheorem{corollary}[equation]{Corollary}

\theoremstyle{definition}
\newtheorem{example}[equation]{Example}
\newtheorem{definition}[equation]{Definition}

\theoremstyle{remark}
\newtheorem{remark}[equation]{Remark}

\def \Hom {\operatorname{Hom}}
\def \End {\operatorname{End}}

\def \ind {\operatorname{ind}}

\def \SL {\operatorname{SL}}

\def \Spec {\operatorname{Spec}}

\newcommand{\CC}{\mathbb{C}}
\newcommand{\ZZ}{\mathbb{Z}}

\newcommand{\QQ}{\mathbb{Q}}

\newcommand{\GG}{\mathbb{G}}

\sloppy

\title{Irreducible representations of finitely generated nilpotent groups}

\author{Iuliya Beloshapka}
\address{Department of Mechanics and Mathematics, Moscow State University, Russia}
\email{i-beloshapka@yandex.ru}

\author{Sergey Gorchinskiy}
\address{Steklov Mathematical Institute of Russian Academy of Sciences, Moscow, Russia}
\address{National Research University Higher School of Economics, Moscow, Russia}
\email{gorchins@mi.ras.ru}

\date{}

\begin{document}

\maketitle

\begin{center}
{\it \small To our teacher Alexey Nikolaevich Parshin, with admiration}
\end{center}

\begin{abstract}
We prove that irreducible complex representations of finitely generated nilpotent groups are monomial if and only if they have finite weight, which was conjectured by Parshin. Note that we consider (possibly, infinite-dimensional) representations without any topological structure. Besides, we prove that for certain induced representations, irreducibility is implied by Schur irreducibility. Both results are obtained in a more general form for representations over an arbitrary field.
\end{abstract}

\section{Introduction}

It is a classical result that irreducible complex representations of finite nilpotent groups are monomial, that is, are induced from characters of subgroups (see, e.g.,~\cite[\S~8.5, Theorem~16]{Serre}). Kirillov~\cite{Kir59} (see also~\cite[Theorem 5.1]{Kirillov}) and Dixmier~\cite[Th\'eor\`eme 2]{Dix} have independently proved an analogous statement for irreducible unitary representations of connected nilpotent Lie groups.

Later, Brown~\cite{Brown} claimed that irreducible unitary representations of (discrete) finitely generated nilpotent groups are monomial if and only if they have finite weight. Recall that a representation $\pi$ of a group~$G$ has finite weight if there is a subgroup $H\subset G$ and a character $\chi$ of $H$ such that the vector space $\Hom_H(\chi,\pi\vert_H)$ is non-zero and finite-dimensional. 

In the plenary lecture at ICM2010, Parshin~\cite[\S~5.4(i)]{ParshCong} (see also~\cite[page~296]{Parshin-Arnal}) conjectured that Brown's equivalence holds for all irreducible complex representations of finitely generated nilpotent groups, without any topological structure on representations. In this setting, by a monomial representation, one means a finitely induced representation (see Definition~\ref{definition:compact}) from a character of a subgroup.

\medskip

Parshin's conjecture is known to be true in some particular cases. Firstly, a similar argument as for finite nilpotent groups shows that all finite-dimensional irreducible complex representations of finitely generated nilpotent groups are monomial (see, e.g.,~\cite[Lemma 1]{Brown} or Proposition~\ref{prop:indfin}).

Secondly, for finitely generated abelian groups, the conjecture holds true, because all irreducible representations of such groups are just characters (this follows from a generalization of Schur's lemma, see, e.g.,~\cite[Claim 2.11]{BZ} or Proposition~\ref{prop:schur}). For the next case of finitely generated nilpotent groups of nilpotency class two, the conjecture was proved by Arnal and Parshin~\cite{Parshin-Arnal}.

Finally, it is easy to show one implication in the conjecture: if an irreducible complex representation is monomial, then it has finite weight (see Proposition~\ref{prop:weight}(ii)).

\medskip

We prove Parshin's conjecture in full generality, which is the main result of the paper (see Theorem~\ref{theorem:main} and also a specification in Remark~\ref{remark:maxconj}).

\medskip

\begin{itemize}
\item[]{}
{\it {\sc Theorem A.} Let $G$ be a finitely generated nilpotent group and $\pi$ a (possibly, infinite-dimensional) irreducible complex representation of~$G$. Then $\pi$ is monomial if and only if $\pi$ has finite weight.
}
\end{itemize}

\medskip

In fact, we prove a more general result on representations over an arbitrary field, which may be non-algebraically closed and may have a positive characteristic (see Theorem~\ref{theorem:key}).

\medskip

\begin{itemize}
\item[]{}
{\it {\sc Theorem B.} Let $G$ be a finitely generated nilpotent group and $\pi$ an irreducible representation of~$G$ over an arbitrary field $K$. Suppose that there is a subgroup $H'\subset G$ and a finite-dimensional irreducible representation $\rho'$ of $H'$ over $K$ such that the vector space $\Hom_{H'}(\rho,\pi|_{H'})$ is non-zero and finite-dimensional. Then there is a subgroup~$H\subset G$ and a finite-dimensional irreducible representation~$\rho$ of $H$ over $K$ such that~$\pi$ is isomorphic to the finitely induced representation $\ind_{H}^G(\rho)$.
}
\end{itemize}

\medskip

Notice that, in general, the pairs $(H,\rho)$ and $(H',\rho')$ as in Theorem B are different. Theorem B implies directly Theorem~A (see Subsection~\ref{subsect:stat}).

\medskip

Theorem A (see also Proposition~\ref{prop:twosubgroups}) can be applied to a description of the moduli space of irreducible representations of finitely generated nilpotent groups. In the case of nilpotency class two, this was done by Parshin~\cite{Parshin}.

Moduli spaces of representations of finitely generated nilpotent groups naturally arise in the study of algebraic varieties by methods of higher-dimensional adeles. These moduli spaces are expected to be used in questions related to \mbox{$L$-functions} of varieties over finite fields, see more details in~\cite{ParshCong}.

Another motivation to study representations without a topological structure and to construct their moduli spaces is Bernstein's theory of smooth complex representations of reductive $p$-adic groups (see, e.g.~\cite{Bernst}).

\medskip

Note that there are irreducible complex representations of finitely generated nilpotent groups that do not satisfy the equivalent conditions of Theorem~A. The examples were constructed by Brown~\cite[\S~2]{Brown} in the context of unitary representations and independently by Berman, \v{S}araja~\cite{BS} and Segal~\cite[Theorems~A, B]{Seg} for representations without a topological structure. A detailed analysis of non-monomial representations for the Heisenberg group over the ring of integers was made by Berman and Kerer~\cite{BK}.

\medskip

A sharp distinction between Brown's setting and Theorem~A is that Brown treats unitary representations, while Theorem~A concerns complex representations without any topological structure. This leads to numerous differences, most notably, the following one. The category of unitary representations is semi-simple. On the other hand, there are non-trivial extensions between representations without a topological structure and, in general, the converse to Schur's lemma does not hold for such representations (see Example~\ref{examp:contr} and the example in Subsection~\ref{subsect:Heis}).

\medskip

Our proof of Theorem~B is based on several crucial ideas from~\cite{Brown}, in particular, we use a certain group-theoretic result on nilpotent groups (see Proposition~\ref{prop:Brownnorm}). Following Brown, we modify the pair $(H',\rho')$ as in Theorem B in order to get the pair $(H,\rho)$. Unfortunately, one of the steps in Brown's strategy of modification is based on a false statement, namely,~\cite[Lemma 6]{Brown} (see Remark~\ref{remark:mistake}).

Thus we have changed the strategy. A surprising phenomenon is that, while constructing the pair~${(H,\rho)}$ as above, we pass through auxiliary pairs $(H_0,\rho_0)$ such that the vector space~${\Hom_{H_0}(\rho_0,\pi|_{H_0})}$ is non-zero but, possibly, has infinite dimension. However, these pairs do satisfy another finiteness condition, namely, they are so-called perfect pairs (see Definition~\ref{def:s}(ii)).

We believe that our strategy of the proof of Theorem~B can be also applied to obtain a correct proof of Brown's equivalence for unitary representations.


\medskip

Another essential new ingredient of the proof of Theorem~B is the following result, which is of independent interest for representation theory: the converse to Schur's lemma does hold true for finitely induced representations from irreducible representations of normal subgroups (see Proposition~\ref{lemma}; for simplicity, we state it here for the case of complex representations).

\medskip

\begin{itemize}
\item[]{}
{\it {\sc Proposition.} Let $H$ be a normal subgroup of an arbitrary group~$G$. Let $\rho$ be an irreducible complex representation of $H$ such that the finitely induced representation $\ind_{H}^{G} (\rho)$ satisfies $\End_{G} \big( \ind_{H}^{G} (\rho) \big) = \CC$. Then the representation~${\ind_{H}^{G} (\rho)}$ is irreducible.
}
\end{itemize}

\medskip

Note that irreducibility of induced representations of connected Lie groups was studied in detail by Jacobsen and Stetk{\ae}r~\cite{JS}.

\medskip

The paper is organized as follows. In Section~\ref{sect:prel}, we provide mostly known results that are used later in the proof of the main theorem. Subsection~\ref{subsect:not} introduces the notation that is used throughout the paper. In Subsection~\ref{sect:roots}, we make a modification of a group-theoretic result of Brown~\cite[Lemma 4]{Brown} suitable for our needs (see Theorem~\ref{theor:S}). Subsection~\ref{sect:end} collects well-known formulas for endomorphisms of finitely induced representations (see Proposition~\ref{prop:end} and Corollary~\ref{lemma:index}), based on Frobenius reciprocity and Mackey's formula. In Subsection~\ref{subsect:Brown}, we define the notion of a $\pi$-irreducible pair for a representation $\pi$ (see Definitions~\ref{definition:weight}(i) and~\ref{defin:pipair}(i)), which is our main tool to show that a representation is finitely induced. We also prove a result that allows to extend $\pi$-irreducible pairs (see Lemma~\ref{lemma:extend}).

Section~\ref{sect:irred} is devoted to irreducibility of finitely induced representations. In Subsection~\ref{sect:irr}, we prove that Schur irreducibility implies irreducibility for certain induced representations (see Proposition~\ref{lemma}, Remark~\ref{remark:equivSchur}, and Corollary~\ref{theorem:irr}). We apply this in Subsection~\ref{sect:nilp} to representations of finitely generated nilpotent groups, obtaining a sufficient condition for irreducibility of finitely induced representations (see Theorem~\ref{theor:char}). In Subsection~\ref{subsect:Heis}, we construct an example showing that, in general, Schur irreducibility does not imply irreducibility for representations of finitely generated nilpotent groups. The example concerns the simplest nilpotent group which is not abelian-by-finite, namely, the Heisenberg group over the ring of integers.

In Section~\ref{sect:main}, we state and prove the main results of the paper. In Subsection~\ref{subsect:stat}, we formulate our key result (see Theorem~\ref{theorem:key}) and deduce from it the equivalence for monomial and finite weight representations (see Proposition~\ref{prop:weight} and Theorem~\ref{theorem:main}). Subsection~\ref{subsect:proof} consists in the proof of Theorem~\ref{theorem:key}. We provide in Subsection~\ref{subsect:isommonom} an isomorphism criterion for finitely induced representations (see Proposition~\ref{prop:twosubgroups}), which repeats essentially~\cite[Theorem 2]{Brown}. Finally, in Subsection~\ref{subsect:nonmonom}, following~\cite{BS} and~\cite{Seg}, we provide an example of an irreducible complex representation of the Heisenberg group over the ring of integers which is not finitely induced from a representation of a proper subgroup.

\medskip

During the work on the paper, we learned from A.\,N.\,Parshin that E.\,K.\,Narayanan and P.\,Singla are studying independently the same subject.

\medskip

We are deeply grateful to A.\,N.\,Parshin for posing the problem and a constant attention to the progress. It is our pleasure to thank C.\,Shramov for many discussions that were highly valuable and stimulating. We are grateful to S.\,Nemirovski for drawing our attention to the paper~\cite{JS}. Both authors were supported by the grants MK-5215.2015.1, NSh-2998.2014.1, and RFBR 14-01-00178. The first named author acknowledges the support of the grants RFBR 14-01-00160 and 13-01-00622. The second named author acknowledges the support of the grant RFBR 13-01-12420, the grant of Dmitry Zimin's Foundation ``Dynasty'', and the subsidy granted to the HSE by the Government of the Russian Federation for the implementation of the Global Competitiveness Program. The second named author is also very grateful for hospitality and excellent working conditions to Institut de Math\'ematiques de Jussieu, where a part of the work was done.

\bigskip

\section{Preliminaries}\label{sect:prel}

\subsection{Notation}\label{subsect:not}

We fix a field $K$ (a priori we do not make additional assumptions on~$K$). For short, by a vector space, we mean a (possibly, infinite-dimensional) vector space over $K$. By a representation of a group, we mean a (possibly, infinite-dimensional) representation over~$K$.

Throughout the paper, $G$ denotes a group and $H$ a subgroup of $G$. Given a subset $E\subset G$, by $\langle E\rangle$ denote the subgroup of~$G$ generated by~$E$.

Further, $\pi$ denotes a representation of~$G$, $\rho$ a representation of $H$, and $\chi\colon H\to K^*$ a character of~$H$. By~${\pi|_H}$ denote the restriction of $\pi$ to~$H$.

For an element $g \in G$, let~${H^{g} \subset G}$ be the conjugate subgroup~${H^{g}=g H g^{-1}}$ and~$\rho^{g}$ the representation of $H^{g}$ defined by the formula $\rho^g(ghg^{-1})=\rho(h)$, where $h \in H$.

\medskip

We mention it explicitly if we require some more properties of the field $K$, groups, or representations.

\subsection{A result from group theory}\label{sect:roots}

By $N_{G}(H)$ denote the normalizer of $H$ in $G$.

\begin{definition} \label{def:ss}
Let $S(H) \subset G$ be the set of all elements $g \in G$ such that the index of $H^g \cap H$ in $H$ is finite.
\end{definition}

Clearly, there is an embedding $N_G(H)\subset S(H)$.

\begin{example}\label{exam:s}
Let $G$ be the group $\SL_2(\ZZ)$ and $H$ the subgroup of all matrices whose lower left entry equals zero. Then a direct calculation shows that~${S(H)=H}$.
\end{example}

\medskip

The following construction will allow us to give an upper bound on the set~${S(H)}$ (see Lemma~\ref{lemma:normalizer} below).

\begin{definition}\label{def:star}
Let $H^{*}$ be the smallest subgroup of $G$ with the following properties: $H^*$ contains $H$ and if an element $g\in G$ satisfies $g^{i} \in H^*$ for some positive integer $i$, then~${g\in H^{*}}$.
\end{definition}

It is easily shown that $H^{*}$ is well-defined, that is, $H^*$ exists (and is unique) for any subgroup $H\subset G$.

\begin{remark}\label{remark:trivial}
\hspace{0cm}
\begin{itemize}
\item[(i)]
There is an equality $(H^*)^*=H^*$.
\item[(ii)]
For any element $g \in G$, we have $(H^{g})^{*} = (H^{*})^{g} $ (cf.~\cite[Lemma 4(1)]{Brown}).
\end{itemize}
\end{remark}

Recall that a group is called {\it Noetherian} if any increasing chain of its subgroups stabilizes. Obviously, this is equivalent to the fact that any subgroup is finitely generated.

\begin{lemma} \label{lemma:normalizer}
Suppose that $G$ is Noetherian. Then there is an embedding~${S(H) \subset  N_{G}(H^{*})}$.
\end{lemma}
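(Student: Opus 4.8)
The goal is to show $S(H)\subset N_G(H^*)$ when $G$ is Noetherian, i.e.\ that every $g\in G$ with $[H:H^g\cap H]<\infty$ normalizes $H^*$. The plan is to fix such a $g$ and to show first that $(H^*)^g\cap H^*$ has finite index in both $H^*$ and $(H^*)^g$, and then to upgrade this finite-index statement to an actual equality using the root-closure property defining $H^*$ together with Noetherianity.

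For the first step, I would start from $[H:H^g\cap H]<\infty$. Since $H\subset H^*$, and conjugation by $g$ is an isomorphism, it suffices to control how passing from $H$ to $H^*$ affects indices. The key observation is that $H^*$ is generated by $H$ together with all elements having a power in $H$; more precisely, one can build $H^*$ as an increasing union of subgroups $H=H_0\subset H_1\subset\cdots$, where $H_{n+1}$ is generated by $H_n$ and all $g'\in G$ with some $g'^i\in H_n$. Because $G$ is Noetherian, $H^*$ is finitely generated, so this union stabilizes: $H^*=H_N$ for some $N$. I would then argue by induction on $n$ that $[H_n : H_n\cap H^g_n]$ stays finite (using Remark~\ref{remark:trivial}(ii), $(H_n)^g$ relates to the analogous construction for $H^g$). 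The point is that adjoining an element with a power in a subgroup of finite index over $H^g\cap H$ does not destroy commensurability with the conjugate; a clean way to see this is that two subgroups $A,B$ with $[A:A\cap B]<\infty$ and $[B:A\cap B]<\infty$ (i.e.\ commensurable) remain commensurable after taking root-closures, since $x^i\in A$ forces $x$ to lie in the root-closure of $A$, which is commensurable-stable. So I get that $H^*$ and $(H^*)^g=(H^g)^*$ are commensurable.

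For the second step, I need commensurability plus root-closure to force $(H^*)^g=H^*$. Set $L=H^*$ and $M=(H^*)^g$, so $[L:L\cap M]<\infty$ and $[M:L\cap M]<\infty$, and both $L$ and $M$ are root-closed in $G$ (the latter by Remark~\ref{remark:trivial}(ii) and (i)). Take any $x\in M$. Since $[L\cap M : \text{something}]$ is finite inside $M$, there is a positive integer $i$ with $x^i\in L\cap M\subset L$; by root-closedness of $L$, $x\in L$. Hence $M\subset L$. Symmetrically $L\subset M$, so $L=M$, i.e.\ $g H^* g^{-1}=H^*$, which is exactly $g\in N_G(H^*)$.

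\textbf{Main obstacle.}
The delicate point is the first step: showing that root-closure preserves commensurability, and in particular that $[H^*:(H^*)^g\cap H^*]$ is finite. One must be careful that taking $H^*$ is a transfinite-looking operation a priori, and the finiteness of index could in principle degrade at each stage; Noetherianity is what saves us, both by making $H^*$ finitely generated (so the construction stabilizes after finitely many steps) and by letting us choose a uniform bound. I would organize this as a lemma: if $A\subset B\subset G$ with $[B:A]<\infty$ then $[B^*:A^*]<\infty$ — or rather the commensurability version, since $H^g\cap H$ need not contain nor be contained in $H^*$ directly. Handling the interaction between conjugation, intersection, and root-closure cleanly (via Remark~\ref{remark:trivial}) is where the real work lies; the final equality argument is then a short root-closedness manipulation.
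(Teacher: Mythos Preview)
Your second step contains a real gap, and it stems from an asymmetry in the hypothesis that you have overlooked. Membership $g\in S(H)$ only gives you $[H:H^g\cap H]<\infty$; it says nothing about $[H^g:H^g\cap H]$. (Indeed, Example~\ref{examp:infinind} exhibits a non-Noetherian $G$ in which $x\in S(H)$ with $H\cap H^x=H$ but $[H^x:H]=\infty$.) Consequently, even if your inductive first step goes through, you can at best conclude that $[H^*:H^*\cap (H^*)^g]<\infty$, not that $[(H^*)^g:H^*\cap (H^*)^g]<\infty$. Your ``symmetrically $L\subset M$'' line therefore fails: to show $(H^*)^g\subset H^*$ by root-closedness you would need every $x\in (H^*)^g$ to have a power in $H^*$, and for that you need the \emph{other} index to be finite. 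You invoke Noetherianity only to make the tower $H_0\subset H_1\subset\cdots$ stabilize, but this is not where the asymmetry gets repaired.

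In fact your first step is much harder than necessary and the second step, once fixed, subsumes it. From $[H:H^g\cap H]<\infty$ one sees directly that each $h\in H$ has a power in $H^g$, hence $H\subset (H^g)^*=(H^*)^g$; since $(H^*)^g$ is root-closed this gives $H^*\subset (H^*)^g$ outright---no induction on $H_n$ is needed, and you get a genuine containment rather than merely a finite-index statement. The paper then uses Noetherianity in a different and essential way: iterating conjugation gives an ascending chain
\[
H^*\subset (H^*)^g\subset (H^*)^{g^2}\subset\cdots,
\]
which must stabilize, forcing $(H^*)^g=H^*$. This chain argument is precisely what converts the one-sided hypothesis into the two-sided conclusion, and it is the ingredient missing from your plan.
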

\begin{proof}
Consider an element $g \in S(H)$. By definition, the index of $H^g \cap H$ in~$H$ is finite. Hence there is a positive integer~$i$ such that for any element~${h \in H}$, we have~${h^i\in H^g}$. Therefore, $H\subset (H^g)^{*}$. By Remark~\ref{remark:trivial}, we see that $H^*\subset (H^*)^g$. Applying conjugation by positive powers of $g$, we obtain an increasing chain of subgroups
$$
H^*\subset (H^*)^g\subset\ldots\subset (H^*)^{g^i} \subset (H^*)^{g^{i+1}}\subset\ldots
$$
Since $G$ is Noetherian, the chain stabilizes. This implies that ${H^*=(H^*)^g}$, that is, $g\in N_{G}(H^{*})$.
\end{proof}

The following example shows that Lemma~\ref{lemma:normalizer} does not hold for an arbitrary group $G$.

\begin{example}\label{examp:infinind}
Let $G$ be the free group generated by elements $x$ and $y$. Let~$H$ be the subgroup of $G$ generated by the elements $x^{-n}yx^{n}$, where $n$ runs over all positive integers. One easily shows that $H$ is freely generated by the elements~$x^{-n}yx^{n}$, thus $G$ is not Noetherian. Since $H\subset H^x$, we have $x\in S(H)$ and $H^x\cap H=H$. However $H^x$ contains the element $y$, which does not belong to $H^*$ (actually, we have $H=H^*$). Consequently,~${x\notin N_G(H^*)}$.
\end{example}

\medskip

Until the end of this subsection, we suppose that the group $G$ is {\it finitely generated and nilpotent}. It turns out that much more can be said about $S(H)$ in this case. The following crucial result was essentially obtained by Malcev (see a comment to the proof of~\cite[Theorem 8]{Mal}); a complete proof can be found, e.g., in~\cite[Lemma 2.8]{Baumslag}.

\begin{proposition}\label{lemma:finite-index}
The index of $H$ in $H^{*}$ is finite.
\end{proposition}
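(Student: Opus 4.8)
**Proposal for proving Proposition~\ref{lemma:finite-index} ($[H^*:H] < \infty$ for $G$ finitely generated nilpotent).**

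The plan is to induct on the nilpotency class of $G$, reducing the general statement to the abelian case and a quotient. First I would record the key structural fact that must be proved along the way, which is really the crux: \emph{the set of elements of $G$ having a positive power in $H$, call it $R(H)$, is already a subgroup}, and moreover $[R(H):H]<\infty$. Indeed, once $R(H)$ is a subgroup it is the smallest subgroup closed under taking roots (one checks that if $g^i\in R(H)$ then $g^{ij}\in H$ for suitable $j$, so $g\in R(H)$, and any subgroup with this closure property and containing $H$ must contain $R(H)$); hence $R(H)=H^*$, and finiteness of $[R(H):H]$ is exactly the claim. So the whole proposition hinges on: (a) closure of $R(H)$ under the group operation, and (b) finiteness of the index.

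For (a) and (b), I would argue by induction on the nilpotency class $c$ of $G$. In the base case $c=1$, $G$ is a finitely generated abelian group, written additively; $R(H)$ is the preimage in $G$ of the torsion subgroup of $G/H$, which is visibly a subgroup, and since $G/H$ is a finitely generated abelian group its torsion subgroup is finite, giving $[R(H):H]<\infty$. For the inductive step, let $Z=Z(G)$ be the center (nontrivial and finitely generated abelian), and pass to $\bar G = G/Z$ of class $c-1$ with $\bar H$ the image of $H$. The key point to establish is that roots in $G$ are controlled by roots in $\bar G$ together with roots in the central direction: if $g^i\in H$ then $\bar g^i\in\bar H$, so $\bar g\in R(\bar H)$; by induction $R(\bar H)$ is a subgroup with $[R(\bar H):\bar H]<\infty$, so the preimage $H_1$ of $R(\bar H)$ in $G$ contains $H$ with finite index and contains every element of $G$ with a power in $H$. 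Now $H_1\cap Z$ has finite index in $H_1$-relevant central part $\dots$ — more precisely, one replaces $H$ by the finite-index overgroup $H_1$ and is reduced to the case where $H$ is the preimage of a \emph{root-closed} (isolated) subgroup downstairs, and then the only roots left to adjoin are "central" ones: $g$ with $g^i\in H$ but $g$ already mapping into $\bar H$, i.e. $g\in HZ$. Writing $g=hz$ one reduces the problem to root-closure and finite index for $H\cap Z$ inside its isolator in the finitely generated abelian group $Z$ (and tracking the commutator corrections $[h,z]$, which vanish since $z$ is central). This handles (a); for (b) one multiplies the two finite indices $[H_1:H]$ and $[\text{isolator of }H\cap Z : H\cap Z]$.

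The main obstacle will be the inductive step's bookkeeping: showing that $R(H)$ is literally closed under multiplication (not just that its elements have powers in $H$) requires handling the non-commutativity, and the clean way is the reduction to $HZ$ above, where all the needed commutators become trivial because $Z$ is central — so the real content is that \emph{an isolator computed "one central layer at a time" agrees with the global root-set}, which is where nilpotency (as opposed to mere solvability, cf.\ the free-group Example~\ref{examp:infinind}) is essential. Once that is in place, finiteness of the index is automatic from finiteness of torsion in finitely generated abelian groups at each of the finitely many layers. Since this argument is exactly Malcev's, I would in the write-up simply cite~\cite{Mal} and~\cite[Lemma 2.8]{Baumslag} for the full details rather than reproduce the layer-by-layer estimate.
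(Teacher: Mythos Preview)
Your proposal is correct and in fact matches the paper exactly: the paper gives no argument at all for this proposition, merely citing Malcev~\cite{Mal} and~\cite[Lemma~2.8]{Baumslag}, which is precisely what you propose to do in your write-up. The inductive sketch you outline (reduce modulo the center, pull back the isolator, then handle the remaining central roots via torsion in a finitely generated abelian group) is the standard argument found in those references, and the details you flag---in particular that $(hz)^i=h^iz^i$ because $z$ is central, and that $R(H)=R(H_1)$ when $[H_1:H]<\infty$---are exactly the ones needed to make it go through.
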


In other words, Proposition~\ref{lemma:finite-index} claims that $H^*$ is the largest subgroup of $G$ that contains $H$ as a subgroup of finite index. Equivalently, $H^{*}$ coincides with the set of all roots of elements of $H$.

\begin{remark}\label{remark:intersect}
Using Proposition~\ref{lemma:finite-index}, one shows easily that there is an equality ${(H_1\cap H_2)^*=H_1^*\cap H_2^*}$ for all subgroups $H_1,H_2\subset G$ (cf.~\cite[Lemma 4(2)]{Brown}).
\end{remark}

Using Proposition~\ref{lemma:finite-index}, Brown~\cite[Lemma 4(3),(4)]{Brown} has shown the following fact.

\begin{proposition}\label{prop:Brownnorm}
There is an equality $N_G(H^*)=N_G(H)^*$ and this subgroup of~$G$ coincides with the set of all elements $g\in G$ such that
the indices of~${H^g\cap H}$ in both $H$ and $H^g$ are finite.
\end{proposition}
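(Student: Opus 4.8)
The plan is to establish the two asserted equalities separately: first that $N_G(H^*)$ equals the set $T$ of all $g\in G$ for which $H^g\cap H$ has finite index in both $H$ and $H^g$, and then that $N_G(H^*)=N_G(H)^*$. Throughout I use that a finitely generated nilpotent group is Noetherian and that each of its subgroups has only finitely many subgroups of any given finite index.

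For $N_G(H^*)=T$ I would argue as follows. The inclusion $T\subset N_G(H^*)$ is immediate, since $T\subset S(H)$ and $S(H)\subset N_G(H^*)$ by Lemma~\ref{lemma:normalizer}. For the reverse inclusion, take $g\in N_G(H^*)$, so that $(H^*)^g=H^*$. By Proposition~\ref{lemma:finite-index} the index $[H^*:H]$ is finite, and conjugation by $g$ shows that $[H^*:H^g]=[(H^*)^g:H^g]=[H^*:H]$ is finite too. Hence $H\cap H^g$, being the intersection of two finite-index subgroups of $H^*$, has finite index in $H^*$, and therefore in both $H$ and $H^g$; thus $g\in T$.

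For $N_G(H^*)=N_G(H)^*$ I would proceed in three steps. First, $N_G(H)\subset N_G(H^*)$, because $(H^*)^g=(H^g)^*=H^*$ whenever $H^g=H$, by Remark~\ref{remark:trivial}(ii). Second, $N_G(H^*)\subset N_G(H)^*$: given $g\in N_G(H^*)$, all the conjugates $H,H^g,H^{g^2},\ldots$ lie in $H^*$ and have one and the same finite index $n:=[H^*:H]$ in it; since $H^*$ is finitely generated it has only finitely many subgroups of index $n$, so $H^{g^a}=H^{g^b}$ for some $a<b$, whence $g^{b-a}\in N_G(H)$ and $g\in N_G(H)^*$. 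Third, one needs the reverse inclusion $N_G(H)^*\subset N_G(H^*)$, which amounts to saying that $N_G(H^*)$ is root-closed; granting this, $N_G(H)^*\subset\big(N_G(H^*)\big)^*=N_G(H^*)$, and together with the second step this yields the desired equality.

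The hard part is the root-closedness of $N_G(H^*)$ (this is Brown's Lemma~4(3)): if $g^i\in N_G(H^*)$ for some $i\ge 1$, then $g\in N_G(H^*)$; equivalently, in a finitely generated nilpotent group the relation $(H^*)^{g^i}=H^*$ forces $(H^*)^g=H^*$. This step genuinely uses nilpotency — the analogous statement already fails in $S_3$ — so a purely formal manipulation of isolators will not suffice. I would prove it by first reducing to the torsion-free case (using the isolator calculus of Remarks~\ref{remark:trivial} and~\ref{remark:intersect} together with Proposition~\ref{lemma:finite-index} to control the finite torsion), and then passing to the Mal'cev completion $G\hookrightarrow G_{\QQ}$, where $H^*$ spans a finite-dimensional $\QQ$-subgroup $V$ with $V^{g^i}=V$; since conjugation by $g$ is a unipotent automorphism and we are in characteristic zero, a unipotent operator whose $i$-th iterate preserves a finite-dimensional subspace already preserves it, so $V^g=V$ and hence $(H^*)^g=H^*$. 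Alternatively, this group-theoretic input can simply be cited from~\cite[Lemma~4]{Brown}.
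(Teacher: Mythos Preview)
The paper does not actually prove this proposition: it simply attributes the result to Brown, citing \cite[Lemma~4(3),(4)]{Brown}. Your proposal therefore goes further than the paper, supplying the argument rather than the reference.

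Your proof is correct. The identification $N_G(H^*)=T$ is clean and uses exactly the tools available in the paper (Lemma~\ref{lemma:normalizer} and Proposition~\ref{lemma:finite-index}). For $N_G(H^*)=N_G(H)^*$, the inclusion $N_G(H^*)\subset N_G(H)^*$ via the pigeonhole on the finitely many index-$n$ subgroups of the finitely generated group $H^*$ is a nice elementary argument. You correctly isolate the one nontrivial step, the root-closedness of $N_G(H^*)$, and your Mal'cev-completion sketch is the right idea: passing to the Lie algebra, conjugation becomes the unipotent operator $\mathrm{Ad}(g)$, and in characteristic zero $\log(\mathrm{Ad}(g)^i)=i\log(\mathrm{Ad}(g))$ shows that if $\mathrm{Ad}(g)^i$ preserves the subalgebra corresponding to $H^*$ then so does $\mathrm{Ad}(g)$. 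The reduction to the torsion-free case deserves a line or two of care (one must check that isolators and normalizers behave well modulo the finite torsion subgroup), but this is routine. Since the paper itself is content to cite Brown for exactly this point, either completing the sketch or citing \cite[Lemma~4]{Brown} is entirely in keeping with the paper's treatment.
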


\medskip

Combining Lemma~\ref{lemma:normalizer} with Proposition~\ref{prop:Brownnorm}, we obtain the following useful result.

\begin{theorem}\label{theor:S}
Suppose that the group $G$ is finitely generated and nilpotent. Then the following holds true:
\begin{itemize}
\item[(i)]
the subset $S(H)\subset G$ is a subgroup;
\item[(ii)]
the index of $N_G(H)$ in $S(H)$ is finite;
\item[(iii)]
for any finite index subgroup $H'\subset H$, we have $S(H')=S(H)$.
\end{itemize}
\end{theorem}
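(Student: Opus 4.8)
The plan is to deduce all three statements from the two results just quoted, namely Lemma~\ref{lemma:normalizer} (which gives $S(H)\subset N_G(H^*)$ once $G$ is Noetherian, and a finitely generated nilpotent group is Noetherian) and Proposition~\ref{prop:Brownnorm} (which identifies $N_G(H^*)=N_G(H)^*$ with the set of $g\in G$ for which $H^g\cap H$ has finite index in both $H$ and $H^g$). For part~(ii), I would first observe that $N_G(H)^*\subset S(H)$: indeed, any $g\in N_G(H)^*$ has a power lying in $N_G(H)$, so $H^{g^i}=H$ for some $i\geq 1$, and then $H\cap H^g\supset H\cap H^{g}$ contains $\bigcap_{j=0}^{i-1}H^{g^j}$, which has finite index in $H$ since each $H\cap H^{g^j}$ does — actually more cleanly, $H^*$ being the set of roots, one checks $g\in N_G(H^*)$ forces $H^g\cap H$ of finite index in $H$ because $H$ has finite index in $H^*=(H^*)^g=(H^g)^*$ which also contains $H^g$ with finite index. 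Combining with Lemma~\ref{lemma:normalizer} gives the chain $N_G(H)^*\subset S(H)\subset N_G(H^*)=N_G(H)^*$, so $S(H)=N_G(H^*)$ and in particular (ii) follows from Proposition~\ref{lemma:finite-index} applied to the subgroup $N_G(H)$ of $G$: the index of $N_G(H)$ in $N_G(H)^*$ is finite.

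This same equality $S(H)=N_G(H^*)$ immediately yields part~(i): the normalizer of any subgroup is a subgroup, so $S(H)=N_G(H^*)$ is a subgroup of $G$. For part~(iii), let $H'\subset H$ be of finite index. First note $(H')^*=H^*$: one inclusion is clear since $H'\subset H$ gives $(H')^*\subset H^*$, and for the other, every element of $H$ has a power in $H'$ (as $H/H'$... more precisely, as $H'$ has finite index in $H$, for $h\in H$ the powers $h,h^2,\dots$ cannot all lie in distinct cosets, so some $h^j\in H'H^{\mathrm{?}}$ — cleaner: $H'$ has finite index in $H$, hence $H$ has finite index in $(H')^*$ by Proposition~\ref{lemma:finite-index}, and $H\subset(H')^*$ forces $H^*\subset((H')^*)^*=(H')^*$ by Remark~\ref{remark:trivial}(i)). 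Therefore $S(H')=N_G((H')^*)=N_G(H^*)=S(H)$.

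Thus essentially everything reduces to establishing the single clean equality
\[
S(H)=N_G(H^*),
\]
and then reading off (i), (ii), (iii) from it together with Proposition~\ref{lemma:finite-index} and Proposition~\ref{prop:Brownnorm}. The main obstacle I anticipate is the inclusion $N_G(H^*)\subset S(H)$, i.e. showing that if $g$ normalizes $H^*$ then $H^g\cap H$ has finite index in $H$; the right tool is Proposition~\ref{prop:Brownnorm} itself, which already characterizes $N_G(H^*)$ via exactly such finiteness of indices, so in fact one just has to match definitions. The reverse inclusion $S(H)\subset N_G(H^*)$ is precisely Lemma~\ref{lemma:normalizer}, valid since finitely generated nilpotent groups are Noetherian. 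So the proof is short: combine Lemma~\ref{lemma:normalizer} and Proposition~\ref{prop:Brownnorm} to get $S(H)=N_G(H^*)=N_G(H)^*$, deduce (i) since this is a subgroup, deduce (ii) from Proposition~\ref{lemma:finite-index} applied to $N_G(H)$, and deduce (iii) from the identity $(H')^*=H^*$ for finite-index $H'\subset H$, which again follows from Proposition~\ref{lemma:finite-index} and Remark~\ref{remark:trivial}.
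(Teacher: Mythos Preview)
Your proposal is correct and follows essentially the same approach as the paper: both establish the equality $S(H)=N_G(H^*)=N_G(H)^*$ by combining Lemma~\ref{lemma:normalizer} (for $S(H)\subset N_G(H^*)$, using that finitely generated nilpotent groups are Noetherian) with Proposition~\ref{prop:Brownnorm} (for the reverse inclusion), and then read off (i), (ii), (iii) from this equality together with Proposition~\ref{lemma:finite-index} and the identity $(H')^*=H^*$. Your exposition meanders a bit through some false starts before arriving at the clean argument, but the final summary paragraph is exactly the paper's proof.
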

\begin{proof}
Recall that any finitely generated nilpotent group is Noetherian~\cite[Theorem~2.18]{Man}. Thus Lemma~\ref{lemma:normalizer} implies the embedding~${S(H)\subset N_G(H^*)}$. By Propositions~\ref{prop:Brownnorm}, we have an opposite embedding, whence $S(H)$ coincides with the subgroup~${N_G(H^*)=N_G(H)^*}$, which proves item~(i). By Proposition~\ref{lemma:finite-index}, the index of $N_G(H)$ in~${S(H)=N_G(H)^*}$ is finite, which is item~(ii). If the index of~$H'$ in $H$ is finite, then there is an equality $(H')^*=H^*$. This implies item~(iii), because, as shown above, $S(H')=N_G\big((H')^*\big)$ and $S(H)=N_G(H^*)$.
\end{proof}

\subsection{Endomorphisms of finitely induced representations} \label{sect:end}

Recall that~$\rho$ is a representation of a subgroup $H\subset G$. By $V$ denote the representation space of $\rho$. Let $V\times_H G$ be the quotient set of~${V\times G}$ by the diagonal action of $H$ given by the formula
$$
{h(v,g)=\big(\rho(h)v,hg\big)}\,.
$$
We have a natural map
$$
p\;:\; V\times_H G\longrightarrow H\backslash G
$$
to the set of right cosets of $H$ in $G$. Note that one has (right) actions of~$G$ on both $V\times_H G$ and $H\backslash G$ by right translations and the map $p$ commutes with these actions. Thus one can say that $V\times_H G$ is a ``$G$-equivariant discrete vector bundle'' on $H\backslash G$.

\begin{definition}\label{definition:compact}
A {\it finitely induced representation} $\ind_H^G(\rho)$ is the representation of $G$ whose representation space consists of all sections of the map $p$ that have finite support on $H\backslash G$. Right translations by $G$ define the action of $G$ on this space.
\end{definition}

\medskip

By {\it Frobenius reciprocity} (see, e.g.,~\cite[Chapter I, \S 5.7]{Vigneras}), for any representation $\pi$ of $G$, there is a canonical isomorphism of vector spaces
\begin{equation}\label{eq:Frob1}
\Hom_{G} \big(\ind_{H}^{G} (\rho), \pi \big) \simeq \Hom_{H} \big( \rho, \pi \vert_{H} \big) \, .
\end{equation}

If the index of $H$ in $G$ is finite, then there is also a canonical isomorphism of vector spaces
\begin{equation}\label{eq:Frob2}
\Hom_{G} \big( \pi, \ind_{H}^{G} (\rho) \big) \simeq \Hom_{H} \big( \pi \vert_{H}, \rho\big) \, .
\end{equation}
Indeed, a natural analog of the isomorphism~\eqref{eq:Frob2} holds true for induced representations constructed similarly as in Definition~\ref{definition:compact} but without the finiteness condition on supports of sections (see, e.g.,~\cite[Chapter~I, \S~5.4]{Vigneras}). When the index of $H$ in $G$ is finite, the latter induction coincides with the finite induction.

\medskip

Given an element $g \in G$, by $\bar g \in H \backslash G /  H$ denote the corresponding double coset $HgH$. Note that the representation~${\ind^{H}_{H^{g} \cap H} (\rho^{g} \vert_{H^{g} \cap H})}$ of $H$ depends only on the double coset $\bar g \in H \backslash G /  H$ up to a canonical isomorphism.

By {\it Mackey's formula} (see, e.g.,~\cite[Chapter~I, \S~5.5]{Vigneras}), there is a canonical isomorphism of representations of $H$
\begin{equation}\label{eq:mackey}
\ind ^{G}_{H} (\rho) \vert_{H} \simeq  \bigoplus_{\bar g \in  H \backslash G / H} \ind^{H}_{H^{g} \cap H} (\rho^{g} \vert_{H^{g} \cap H}) \, .
\end{equation}
Using the isomorphisms~\eqref{eq:Frob1} and~\eqref{eq:mackey}, we get a canonical isomorphism of vector spaces
\begin{equation}\label{eq:end}
\End_{G} \big( \ind^{G}_{H}(\rho) \big) \simeq \bigoplus_{ \bar g \in H \backslash G / H }  \Hom_{H} \big( \rho,  \ind^{H}_{H^{g} \cap H} (\rho^{g} \vert_{H^{g} \cap H} ) \big) \, .
\end{equation}

\begin{remark}\label{rmk:injend}
It follows from the isomorphism~\eqref{eq:mackey} that $\rho$ is canonically identified with a direct summand of the representation $\ind_H^G(\rho)\vert_H$. In particular, this implies that the natural homomorphism~${\End_H(\rho)\to \End_G\big(\ind_H^G(\rho)\big)}$ is injective.
\end{remark}

\medskip

\begin{lemma}\label{lemma:nofinite}
If the index of $H$ in $G$ is infinite, then the representation~${\ind_{H}^{G} (\rho)}$ of~$G$ does not have non-zero finite-dimensional subrepresentations.
\end{lemma}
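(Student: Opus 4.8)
The plan is to argue by contradiction. Suppose $W\subset\ind_H^G(\rho)$ is a non-zero finite-dimensional $G$-subrepresentation, say $\dim W=d$. I will exhibit $d+1$ linearly independent vectors in $W$, namely suitable $G$-translates of a single non-zero element of $W$ whose supports on $H\backslash G$ are pairwise disjoint.

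First I would fix notation for supports. Realizing $\ind_H^G(\rho)$ (cf. Definition~\ref{definition:compact}) as the space of functions $f\colon G\to V$ with $f(hg)=\rho(h)f(g)$ and with $f$ vanishing outside finitely many right cosets of $H$, write $\mathrm{supp}(f)\subset H\backslash G$ for the finite set of cosets $Hg$ with $f(g)\neq 0$; this is well defined since $\rho(h)$ is invertible. As the $G$-action is by right translations, the support of a $G$-translate $g\cdot f$ is a right translate of $\mathrm{supp}(f)$ in $H\backslash G$, in particular it has the same cardinality. I would also record the elementary fact that non-zero elements of $\ind_H^G(\rho)$ with pairwise disjoint supports are linearly independent over $K$: in a vanishing linear combination, evaluating on a coset in the support of a chosen summand kills all other summands and forces that coefficient to be zero.

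The core step is the claim that for a non-zero $f\in\ind_H^G(\rho)$ and every $m\geq 0$ one can choose elements $g_0=e,g_1,\dots,g_m\in G$ such that the translates $\mathrm{supp}(g_i\cdot f)$ are pairwise disjoint. I would prove this by induction on $m$. Given $g_0,\dots,g_m$ and setting $B=\bigcup_{i=0}^m\mathrm{supp}(g_i\cdot f)$, an element $g\in G$ fails to yield a fresh translate precisely when $\mathrm{supp}(g\cdot f)$ meets $B$; choosing representatives $a_k$ of the cosets in $\mathrm{supp}(f)$ and $b_l$ of those in $B$, a short computation shows that this happens exactly when $g$ lies in one of the finitely many sets $a_k^{-1}Hb_l$, each of which is a coset of a subgroup conjugate to $H$. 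Since every conjugate of $H$ has the same infinite index in $G$, the classical fact that a group is never the union of finitely many cosets of subgroups of infinite index (B.\,H.\,Neumann's lemma) shows that the ``bad'' set is a proper subset of $G$; any $g$ outside it serves as $g_{m+1}$.

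Granting the claim, pick $0\neq f\in W$ and apply it with $m=d$: then $g_0\cdot f,\dots,g_d\cdot f$ are $d+1$ non-zero elements of $W$ with pairwise disjoint supports, hence linearly independent, contradicting $\dim W=d$. The main obstacle is the core step, specifically the input that $G$ is not covered by finitely many cosets of infinite-index subgroups. When $H$ is normal this is immediate, since then the bad set is a union of at most $|\mathrm{supp}(f)|\cdot|B|<\infty$ cosets of $H$ itself and $[G:H]=\infty$; in general one must invoke Neumann's lemma (or reproduce its brief inductive proof).
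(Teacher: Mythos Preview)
Your argument is correct, but the paper's proof is considerably shorter and avoids the appeal to Neumann's lemma. Instead of producing many translates with disjoint supports, the paper takes the union $X\subset H\backslash G$ of the supports of \emph{all} elements of the finite-dimensional subrepresentation~$\tau$; since $\tau$ is spanned by finitely many finitely supported sections, $X$ is finite, and since $\tau$ is $G$-stable and supports transform by right translation, $X$ is $G$-invariant. Transitivity of the $G$-action on $H\backslash G$ then forces $X=H\backslash G$, contradicting infiniteness of the index.

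The contrast is instructive. Your inductive construction is explicit and quantitative (it produces arbitrarily many linearly independent elements), but to carry it out in the non-normal case you must invoke an external combinatorial fact about coset coverings. The paper's approach trades this for a one-line global observation: rather than avoiding a finite union of cosets, it simply notes that the total support of a $G$-stable subspace is itself $G$-stable, and transitivity does the rest. In effect, your proof works coset by coset, while the paper's works at the level of the whole orbit at once; the latter is both more elementary (no Neumann) and more direct here.
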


\begin{proof}
Suppose that there is a non-zero finite-dimensional subrepresentation~$\tau$ of~$\ind_{H}^{G} (\rho)$. Let $X\subset H \backslash G$ be the union of the supports of all sections in the representation space of $\tau$ (see Definition~\ref{definition:compact}). Since $\tau$ is finite-dimensional and $\ind_H^G(\rho)$ is finitely induced, the set $X$ is finite. It can easily be checked that $X$ is invariant under the action of $G$ on $H \backslash G$ by right translations.

On the other hand, $G$ acts transitively on~$H \backslash G$, whence $X=H\backslash G$. By the assumption of the lemma, the set $H \backslash G$ is infinite, thus we get a contradiction.
\end{proof}

\medskip

Clearly, the subset $S(H)\subset G$ (see Definition~\ref{def:ss}) is invariant under left and right translations by elements of $H$. Combining the isomorphism~\eqref{eq:end} with Lemma~\ref{lemma:nofinite} and the isomorphism~\eqref{eq:Frob2}, we obtain the following fact.

\begin{proposition}\label{prop:end}
If $\rho$ is finite-dimensional, then there is a canonical isomorphism of vector spaces
$$
\End_{G} \big( \ind^{G}_{H}(\rho) \big) \simeq \bigoplus_{ \bar g \in H \backslash S(H) / H }  \Hom_{H^g \cap H} \big( \rho \vert_{H^g \cap H},  \rho^{g} \vert_{H^{g} \cap H}  \big) \, .
$$
\end{proposition}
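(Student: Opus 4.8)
The plan is to start from the general endomorphism formula~\eqref{eq:end} and successively simplify each summand, showing that only the terms indexed by double cosets inside $S(H)$ survive, and that for those terms the Hom-space can be rewritten using Frobenius reciprocity~\eqref{eq:Frob2}. Concretely, for a fixed double coset $\bar g\in H\backslash G/H$ I would analyze the summand
$$
\Hom_{H}\big(\rho,\ind^{H}_{H^{g}\cap H}(\rho^{g}\vert_{H^{g}\cap H})\big).
$$

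First I would treat the case $g\notin S(H)$, i.e. the index of $H^{g}\cap H$ in $H$ is infinite. By Lemma~\ref{lemma:nofinite} applied to the subgroup $H^{g}\cap H\subset H$, the representation $\ind^{H}_{H^{g}\cap H}(\rho^{g}\vert_{H^{g}\cap H})$ has no non-zero finite-dimensional subrepresentation. Since $\rho$ is finite-dimensional and irreducible-looking arguments are not even needed here: any non-zero $H$-homomorphism from $\rho$ would have finite-dimensional (hence, being a quotient of $\rho$, possibly not a subrepresentation, but its image is) — more carefully, the image of such a homomorphism is a non-zero subrepresentation of the induced representation which is finite-dimensional (a quotient of the finite-dimensional $\rho$), contradicting Lemma~\ref{lemma:nofinite}. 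Hence the summand vanishes whenever $g\notin S(H)$, so the direct sum in~\eqref{eq:end} may be taken over $\bar g\in H\backslash S(H)/H$ only; here I use that $S(H)$ is left- and right-$H$-invariant, as noted just before the statement.

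Next, for $g\in S(H)$, the index of $H^{g}\cap H$ in $H$ is finite, so the induction $\ind^{H}_{H^{g}\cap H}(-)$ is a \emph{finite} induction and the second Frobenius reciprocity isomorphism~\eqref{eq:Frob2} applies with the pair $(H^{g}\cap H)\subset H$ in place of $H\subset G$:
$$
\Hom_{H}\big(\rho,\ind^{H}_{H^{g}\cap H}(\rho^{g}\vert_{H^{g}\cap H})\big)\simeq \Hom_{H^{g}\cap H}\big(\rho\vert_{H^{g}\cap H},\rho^{g}\vert_{H^{g}\cap H}\big).
$$
Assembling these two reductions gives exactly the claimed formula. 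I expect the only point requiring care is the vanishing argument for $g\notin S(H)$: one must phrase it in terms of the \emph{image} of a homomorphism (a finite-dimensional subrepresentation of the induced module) rather than a subrepresentation of $\rho$, so that Lemma~\ref{lemma:nofinite} genuinely applies; everything else is a direct substitution into~\eqref{eq:end}, \eqref{eq:Frob2}, and the bookkeeping that all the isomorphisms involved are canonical and compatible with the decomposition over double cosets.
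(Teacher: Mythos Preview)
Your proposal is correct and follows exactly the same approach as the paper: the paper's proof is the single sentence ``Combining the isomorphism~\eqref{eq:end} with Lemma~\ref{lemma:nofinite} and the isomorphism~\eqref{eq:Frob2}, we obtain the following fact,'' and you have simply unpacked what that combination means. Your care about phrasing the vanishing step via the \emph{image} of a homomorphism (a finite-dimensional subrepresentation of the induced module, to which Lemma~\ref{lemma:nofinite} applies) is the right way to make that step precise.
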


Note that the vector space $\Hom_{H^g \cap H} \big( \rho \vert_{H^g \cap H},  \rho^{g} \vert_{H^{g} \cap H}  \big)$ depends only on the double coset $\bar g \in H \backslash G / H$ up to a canonical isomorphism.

\begin{remark}
An analog of Proposition~\ref{prop:end} for unitary representations was discovered by Mackey~\cite[Theorem $3'$]{Mac}. Note that for unitary representations, one replaces the set $S(H)$ by the subset $S(H)'\subset S(H)$ that consists of all elements $g\in G$ such that $H^g\cap H$ is of finite index in both $H$ and $H^g$. Example~\ref{examp:infinind} shows that $S(H)'\ne S(H)$ for an arbitrary group $G$. Nevertheless, Lemma~\ref{lemma:normalizer} and Proposition~\ref{prop:Brownnorm} imply the equality $S(H)'=S(H)$ when $G$ is a finitely generated nilpotent group.
\end{remark}

\medskip

Proposition~\ref{prop:end} motivates the following definition.

\begin{definition} \label{def:s}
\hspace{0cm}
\begin{itemize}
\item[(i)] Let $S(H, \rho) \subset G$ be the set of all elements $g \in S(H)$ such that
$$
\Hom_{H^g\cap H}\big(\rho \vert_{H^g \cap H}, \rho^{g} \vert_{H^g \cap H}\big)\ne 0\,.
$$
\item[(ii)] A pair $(H, \rho)$ is called \emph{perfect} if the subset $S(H, \rho)\subset G$ is a subgroup, the group~$H$ is normal in $S(H, \rho)$, and the index of $H$ in $S(H,\rho)$ is finite.
\end{itemize}
\end{definition}

Clearly, there is an embedding $H\subset S(H,\rho)$. Also, it is easily shown that the subset $S(H,\rho)\subset G$ is invariant under left and right translations by elements of $H$.

\begin{remark}\label{remark:perf}
\hspace{0cm}
\begin{itemize}
\item[(i)]
Suppose that for an element $g\in S(H,\rho)$, the representations $\rho \vert_{H^g \cap H}$ and $\rho^{g} \vert_{H^g \cap H}$ are irreducible. Since any non-zero morphism between irreducible representations is an isomorphism, this implies an isomorphism of representations $\rho \vert_{H^g \cap H}\simeq \rho^{g} \vert_{H^g \cap H}$. In particular, this holds in the following two cases: if $\rho=\chi$ is a character; if $\rho$ is irreducible, the subset $S(H, \rho)\subset G$ is a subgroup, and $H$ is normal in $S(H, \rho)$.
\item[(ii)]
Suppose that $\rho$ is irreducible and there is a subgroup $F\subset G$ such that~${S(H,\rho)}$ is contained in $F$ (in particular, we have $H\subset F$) and $H$ is normal in $F$. Then the group $F$ acts on $H$ by conjugation, which gives an action of $F$ on the set of isomorphism classes of representations of~$H$. It follows from item~(i) that $S(H,\rho)$ coincides with the stabilizer in $F$ of the isomorphism class of $\rho$ with respect to the latter action. Therefore the subset $S(H,\rho)\subset G$ is a subgroup and $H$ is normal in~${S(H,\rho)}$.
\end{itemize}
\end{remark}

Proposition~\ref{prop:end} implies directly the following fact.

\begin{corollary}\label{lemma:index}
If $\rho$ is finite-dimensional, then the following conditions are equivalent:
\begin{itemize}
\item[(i)]
the natural homomorphism $\End_H(\rho)\to\End_G\big(\ind_H^G(\rho)\big)$ is an isomorphism;
\item[(ii)]
there is an equality $S(H,\rho)=H$.
\end{itemize}
\end{corollary}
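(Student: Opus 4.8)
The plan is to read off both statements from the canonical decomposition
$$
\End_{G} \big( \ind^{G}_{H}(\rho) \big) \simeq \bigoplus_{ \bar g \in H \backslash S(H) / H }  \Hom_{H^g \cap H} \big( \rho \vert_{H^g \cap H},  \rho^{g} \vert_{H^{g} \cap H}  \big)
$$
of Proposition~\ref{prop:end}, which applies since $\rho$ is assumed finite-dimensional. Note that $e\in S(H)$, and the summand indexed by the trivial double coset $\bar e\in H\backslash S(H)/H$ is $\Hom_H(\rho,\rho)=\End_H(\rho)$, because $H^e\cap H=H$ and $\rho^e=\rho$. The first step is to verify that, under the above isomorphism, the natural homomorphism $\End_H(\rho)\to\End_G\big(\ind_H^G(\rho)\big)$ is identified with the inclusion of this $\bar e$-summand. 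This is pure bookkeeping with the identifications used to build~\eqref{eq:end}: by~\eqref{eq:mackey} the representation $\rho$ is the $\bar e$-summand of $\ind_H^G(\rho)\vert_H$ (Remark~\ref{rmk:injend}); for $\phi\in\End_H(\rho)$ the induced endomorphism $\ind_H^G(\phi)$ preserves the Mackey decomposition~\eqref{eq:mackey} and restricts to $\phi$ on that summand; and the component of~\eqref{eq:end} at $\bar e$ is obtained from $\Hom_H\big(\rho,\ind_H^G(\rho)\vert_H\big)$ via Frobenius reciprocity~\eqref{eq:Frob1} together with~\eqref{eq:Frob2} applied to $H^e\cap H=H$.

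Granting this identification, the equivalence follows formally. By Definition~\ref{def:s}(i), for $g\in S(H)$ the summand $\Hom_{H^g\cap H}\big(\rho\vert_{H^g\cap H},\rho^g\vert_{H^g\cap H}\big)$ is non-zero if and only if $g\in S(H,\rho)$, and it depends only on the double coset $\bar g$. Hence the natural homomorphism in~(i), being the inclusion of the $\bar e$-summand, is an isomorphism precisely when every summand indexed by a double coset with representative in $S(H)\setminus H$ vanishes, that is, precisely when $\big(S(H)\setminus H\big)\cap S(H,\rho)=\emptyset$. Since $H\subset S(H,\rho)\subset S(H)$ always holds, this last condition is equivalent to $S(H,\rho)=H$, which is~(ii).

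The only point that needs a moment's care is the identification in the first step, and even there the work is just unwinding the canonical isomorphisms~\eqref{eq:Frob1}, \eqref{eq:Frob2}, \eqref{eq:mackey} that underlie Proposition~\ref{prop:end}; no new input is required. Everything else is a direct translation between vanishing of $\Hom$-summands and membership in $S(H,\rho)$.
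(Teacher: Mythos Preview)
Your proof is correct and takes essentially the same approach as the paper, which simply says that the corollary follows directly from Proposition~\ref{prop:end}. You have merely made explicit the identification of the natural map $\End_H(\rho)\to\End_G\big(\ind_H^G(\rho)\big)$ with the inclusion of the $\bar e$-summand, a detail the paper leaves implicit.
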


\begin{remark}
If $\rho=\chi$ is a character, then by Proposition~\ref{prop:end}, there is a canonical isomorphism of vector spaces
$$
\End_{G} \big( \ind_{H}^{G} (\chi) \big) \simeq \bigoplus_{H \backslash S(H, \chi) / H}K \, .
$$
\end{remark}

\subsection{Irreducible pairs}\label{subsect:Brown}

\begin{definition}\label{definition:weight}
\hspace{0cm}
\begin{itemize}
\item[(i)]
An {\it irreducible pair} is a pair $(H,\rho)$, where $H\subset G$ is a subgroup and~$\rho$ is a (non-zero) finite-dimensional irreducible representation of~$H$. A {\it weight pair} is a pair $(H,\chi)$, where $\chi$ is a character of $H$.
\item[(ii)]
Given an irreducible pair $(H,\rho)$, a finite-dimensional representation $\sigma$ of $H$ is {\it $\rho$-isotypic} if $\sigma\simeq \rho^{\oplus r}$ for some positive integer $r$.
\item[(iii)]
Define the following partial order on the set of irreducible pairs: put ${(H, \rho) \leqslant (H', \rho')}$ if and only if $H \subset H'$ and $\rho'|_H$ is $\rho$-isotypic.
\end{itemize}
\end{definition}

Given weight pairs $(H,\chi)$ and $(H',\chi')$, one has $(H,\chi)\leqslant (H',\chi')$ if and only if $H\subset H'$ and $\chi'|_H=\chi$.

\begin{lemma}\label{lemma:isot}
Let $(H,\rho)$ be an irreducible pair and $\sigma$ a subquotient of the representation $\rho^{\oplus r}$, where $r$ is a positive integer. Then $\sigma$ is~\mbox{$\rho$-iso\-ty\-pic}.
\end{lemma}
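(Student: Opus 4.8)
The plan is to reduce to the semisimplicity of $\rho^{\oplus r}$ as a $K[H]$-module and then analyze subquotients. First I would observe that since $\rho$ is a finite-dimensional irreducible representation of $H$, the direct sum $\rho^{\oplus r}$ is a semisimple $K[H]$-module whose only simple subquotient (up to isomorphism) is $\rho$ itself: indeed, every composition factor of $\rho^{\oplus r}$ is isomorphic to $\rho$. Now $\sigma$ is given as a subquotient, i.e.\ $\sigma \simeq W'/W''$ for $H$-submodules $W'' \subset W' \subset \rho^{\oplus r}$. Since submodules of a semisimple module are direct summands, $W'$ is a direct summand of $\rho^{\oplus r}$, hence $W'$ is itself semisimple with all composition factors isomorphic to $\rho$; likewise $W''$ is a direct summand of $W'$, so $\sigma \simeq W'/W''$ is again a direct summand of $W'$, hence semisimple with all composition factors isomorphic to $\rho$.

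Next I would conclude that a semisimple $K[H]$-module all of whose simple summands are isomorphic to the finite-dimensional module $\rho$ is isomorphic to $\rho^{\oplus s}$ for some $s \geqslant 0$: this is immediate from the structure of semisimple modules (decompose $\sigma$ into simple summands, each isomorphic to $\rho$, and $s$ is the number of summands, which is finite because $\sigma$ is finite-dimensional and $\rho \neq 0$). It remains to rule out $s = 0$, i.e.\ to check $\sigma \neq 0$; but this is part of the hypothesis, since by convention (Definition~\ref{definition:weight}(i)) representations in irreducible pairs, and the subquotients under consideration, are understood to be non-zero — if one wants to be careful, one simply notes that the statement is vacuous or trivial when $\sigma = 0$, and otherwise $s \geqslant 1$, so $\sigma$ is $\rho$-isotypic in the sense of Definition~\ref{definition:weight}(ii).

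I expect the main (and only real) point to be the use of semisimplicity of finite-length modules with a single isomorphism type of composition factor over the possibly-non-commutative, possibly-infinite-dimensional group algebra $K[H]$: one must be slightly careful that $\rho$ being \emph{finite-dimensional} irreducible is what makes $\rho^{\oplus r}$ a genuine semisimple module of finite length, so that "submodule = direct summand" and "quotient of semisimple is semisimple" apply. No finiteness of $G$ or $H$ is needed, and no hypothesis on the field $K$. The argument is entirely module-theoretic and short; there is no serious obstacle.
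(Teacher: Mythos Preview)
Your proof is correct. The route differs from the paper's mainly in the level of abstraction: you invoke the general theory of semisimple modules (submodules and quotients of a semisimple module are semisimple, and any finite-length semisimple module with a single isomorphism type of simple constituent is a direct sum of copies of that simple), whereas the paper reproves exactly these facts by hand in this specific situation. Concretely, the paper first shows that any \emph{irreducible} subrepresentation of $\rho^{\oplus r}$ is isomorphic to $\rho$ and splits off, by looking at the projections onto the $r$ summands; it then treats an arbitrary subrepresentation by induction on dimension, and finally handles quotients via duality of finite-dimensional representations. Your argument is shorter and more conceptual; the paper's is more elementary and entirely self-contained, requiring no outside module-theoretic input. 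Both correctly identify that finite-dimensionality of $\rho$ (hence finite length of $\rho^{\oplus r}$) is the only ingredient needed, with no hypotheses on $K$ or on $H$.
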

\begin{proof}
First suppose that $\sigma$ is an irreducible subrepresentation of~$\rho^{\oplus r}$. Looking at the projections~${\rho^{\oplus r}\to\rho}$ to each of $r$ natural direct summands of~$\rho^{\oplus r}$, we see that there is a non-zero projection $f\colon\sigma\to \rho$, say, to the $i$-th summand. The morphism $f$ is an isomorphism by the irreducibility of~$\sigma$ and~$\rho$. Furthermore, the subrepresentation~$\sigma$ splits out of~$\rho^{\oplus r}$. Indeed, the corresponding morphism $\rho^{\oplus r}\to\rho$ can be taken to be zero on all summands except for the $i$-th one and to be the inverse to~$f$ on the $i$-th summand.

Now let $\sigma\subset \rho^{\oplus r}$ be an arbitrary subrepresentation. Since $\sigma$ is finite-dimensional, there is an irreducible subrepresentation $\sigma'\subset \sigma$. By what was shown above, we see that $\sigma'\simeq\rho$ and $\sigma'$ is a direct summand of $\rho^{\oplus r}$. It follows that $\sigma'$ is a direct summand of $\sigma$ as well. Thus induction on dimension of $\sigma$ implies that $\sigma$ is $\rho$-isotypic.

By duality for finite-dimensional representations, we obtain that any quotient of $\rho^{\oplus r}$ is $\rho$-isotypic. This completes the proof.
\end{proof}

Recall that $\pi$ is a representation of $G$.

\begin{definition}\label{defin:pipair}
\hspace{0cm}
\begin{itemize}
\item[(i)]
A {\it $\pi$-irreducible pair} is an irreducible pair $(H,\rho)$ such that the vector space $\Hom_{H} (\rho, \pi \vert_{H} )$ is non-zero. A $\pi$-irreducible pair is {\it finite} if the vector space $\Hom_{H} (\rho, \pi \vert_{H} )$ is finite-dimensional. A {\it (finite) $\pi$-weight pair} is defined similarly.
\item[(ii)]
A representation $\pi$ \emph{has finite weight} if there is a finite $\pi$-weight pair.
\end{itemize}
\end{definition}

We will use the following simple observation.

\begin{remark}\label{remark:extend}
Let $(H,\rho)$ be a finite $\pi$-irreducible pair. Suppose that the subset $S(H,\rho)\subset G$ is a subgroup and $H$ is normal in $S(H,\rho)$. Let $W$ be the $\rho$-isotypic subspace of the representation space of $\pi$, that is, $W$ is the representation space of the image of the natural morphism of representations of $H$
$$
\rho\otimes_K\Hom_H(\rho,\pi|_H)\longrightarrow \pi|_H\,,
$$
where $H$ acts trivially on the vector space $\Hom_H(\rho,\pi|_H)$. Then $W$ is invariant under the action of $S(H,\rho)$. Also, by Lemma~\ref{lemma:isot}, the representation of $H$ on~$W$ is $\rho$-isotypic.
\end{remark}

\medskip

The following result allows us to extend $\pi$-irreducible pairs.

\begin{lemma}\label{lemma:extend}
Let $(H,\rho)$ be a $\pi$-irreducible pair and $g\in G$ an element such that $H^g=H$ and $\rho^g\simeq\rho$. Suppose that at least one of the following conditions holds:
\begin{itemize}
\item[(i)]
the $\pi$-irreducible pair $(H,\rho)$ is finite;
\item[(ii)]
there is a positive integer $n$ such that $g^n\in G$.
\end{itemize}
Then there is a $\pi$-irreducible pair $(H',\rho')$ such that~${(H,\rho)<(H',\rho')}$, where~${H'=\langle H,g \rangle}$.
\end{lemma}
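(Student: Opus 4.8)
The plan is to build $\rho'$ on the group $H'=\langle H,g\rangle$ by first analyzing the structure of $H'$ over $H$. Since $H^g=H$, the subgroup $H$ is normal in $H'$, and $H'/H$ is cyclic, generated by the image of $g$. In case~(ii) this cyclic group is finite, of order dividing $n$; in case~(i) it may be infinite, but then I will instead use the hypothesis that $\Hom_H(\rho,\pi|_H)$ is finite-dimensional together with the fact that $g$ stabilizes the isomorphism class of $\rho$. The isomorphism $\rho^g\simeq\rho$ means that $g$ acts on the representation space $V$ of $\rho$ by a map $A$ intertwining $\rho$ with $\rho^g$, i.e. $A\rho(h)A^{-1}=\rho(ghg^{-1})$; a standard extension argument then tries to extend $\rho$ to $H'$ by sending $g$ to some scalar multiple of $A$. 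The obstruction is that $A^{m}$ (where $m$ is the order of $gH$ in $H'/H$, or a power landing $g$ back in $H$) need not lie in the span of $\rho(H)$-scalars, so $\rho$ may not extend; this is exactly why the conclusion only asserts $\rho'|_H$ is $\rho$-isotypic rather than $\rho'|_H\simeq\rho$.

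Concretely I would proceed as follows. Set $W$ to be the $\rho$-isotypic subspace of $\pi|_H$ as in Remark~\ref{remark:extend}, so $W\simeq V\otimes_K M$ with $M=\Hom_H(\rho,\pi|_H)$ and, crucially, $W$ is invariant under any element of $G$ normalizing $H$ and fixing the class of $\rho$ — in particular under $g$. Thus $g$ acts on $W$, hence $\langle H,g\rangle=H'$ acts on $W$, giving a representation $\tau$ of $H'$ on $W$ whose restriction to $H$ is $\rho$-isotypic (namely $\rho^{\oplus r}$ with $r=\dim_K M$, which is finite in case~(i); in case~(ii) one can cut down to a finite-dimensional $g$-invariant piece first, since $g^n\in G$ acts on $W$ and one only needs a finite-dimensional invariant subspace containing a copy of $\rho$). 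Now I decompose $\tau$ into irreducible representations of $H'$. Each irreducible subquotient $\rho'$ of $\tau$ has the property that $\rho'|_H$ is a subquotient of $\rho^{\oplus r}$, hence $\rho$-isotypic by Lemma~\ref{lemma:isot}; in particular $\rho'|_H\neq 0$, so the inclusion $H\subset H'$ together with $\rho$-isotypy gives $(H,\rho)\leqslant(H',\rho')$, and since $H\subsetneq H'$ (as $g\notin H$ — otherwise there is nothing to prove, or the statement is trivial) we get strict inequality $(H,\rho)<(H',\rho')$.

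It remains to choose the subquotient $\rho'$ so that $(H',\rho')$ is again a $\pi$-irreducible pair, i.e. $\Hom_{H'}(\rho',\pi|_{H'})\neq 0$. Here I use that $W$ sits inside $\pi$ as an $H'$-subrepresentation (this is the content of $W$ being $S(H,\rho)$-invariant and $g\in S(H,\rho)$), so any irreducible subrepresentation $\rho'\subset\tau=\pi|_{H'}\big|_W$ embeds into $\pi|_{H'}$. Since $\tau$ is finite-dimensional and non-zero it has an irreducible subrepresentation $\rho'$, and by the previous paragraph this $\rho'$ automatically satisfies $\rho'|_H$ is $\rho$-isotypic. Hence $(H',\rho')$ is a $\pi$-irreducible pair with $(H,\rho)<(H',\rho')$, as required.

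The main obstacle is the passage from ``$g$ stabilizes the class of $\rho$'' to an honest finite-dimensional $H'$-representation inside $\pi$: in case~(ii) one must verify that finitely many $g$-translates of a chosen copy of $\rho$ span a finite-dimensional $g$-invariant subspace despite $H'/H$ possibly being infinite cyclic (using $g^n\in G$, though one should be careful that $g^n\in G$ is automatic — presumably the intended hypothesis is $g^n\in H$ for some $n$, cf. the role of $H^*$; I would read it as: $\langle H,g\rangle/H$ is either finite or one only needs the finite-weight input of~(i)), and in case~(i) one must use finite-dimensionality of $M=\Hom_H(\rho,\pi|_H)$ to conclude $W$ itself is a finite-dimensional (over the $\rho$-multiplicity) object on which $g$, and hence $H'$, acts, so that $\tau$ is genuinely finite-dimensional and Lemma~\ref{lemma:isot} applies. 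Once this finiteness is in hand, extracting the irreducible subquotient and checking the order relation is routine.
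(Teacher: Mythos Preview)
Your proposal is correct and follows essentially the same route as the paper's proof. For case~(i) you use the $\rho$-isotypic subspace $W\subset\pi$, finite-dimensional by hypothesis and $H'$-invariant since $g$ normalizes $H$ and fixes the class of $\rho$ --- this is exactly the paper's appeal to Remark~\ref{remark:extend}. For case~(ii) your ``finitely many $g$-translates of a chosen copy of $\rho$'' is precisely the paper's construction $U=\sum_{i=0}^{n-1}\pi(g^i)U_0$ with $U_0$ the image of a single nonzero morphism $\rho\to\pi|_H$; the paper then notes $U$ is $g$- and $H$-invariant and is a quotient of $\rho^{\oplus n}$, hence $\rho$-isotypic by Lemma~\ref{lemma:isot}. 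Your observation that ``$g^n\in G$'' must be read as ``$g^n\in H$'' is correct and is indeed how the lemma is applied in Step~3 of the proof of Theorem~\ref{theorem:key}. The opening discussion of an intertwiner $A$ and the obstruction to extending $\rho$ itself is not needed (and the paper does not go there), but it does no harm.
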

\begin{proof}
Since any finite-dimensional representation contains an irreducible subrepresentation, by Lemma~\ref{lemma:isot}, it is enough to find a non-zero finite-dimensional subrepresentation of $\pi|_{H'}$ whose restriction to $H$ is $\rho$-isotypic.

If condition (i) holds, then Remark~\ref{remark:extend} provides the needed finite-dimensional subrepresentation of $\pi|_{H'}$, because $H'\subset S(H,\rho)$.

Suppose that condition (ii) holds true. Let $U_0$ be the representation space of the image of any non-zero morphism of representations $\rho\to\pi|_H$ and put
$$
U=\sum_{i=0}^{n-1}\pi(g^i)U_0\,.
$$
Clearly, $U$ is invariant under the action of the operator~$\pi(g)$. Since $H^g=H$, we see that $U$ is also is invariant under the action of the operators $\pi(h)$ for all $h\in H$. Finally, since $\rho^g\simeq \rho$, the representation of $H$ on~$U$ is a quotient of $\rho^{\oplus n}$. Hence by Lemma~\ref{lemma:isot}, the representation of $H$ on $U$ is $\rho$-isotypic. Thus~$U$ gives the needed finite-dimensional subrepresentation of~${\pi|_{H'}}$.
\end{proof}

\begin{example}\label{examp:extend}
Let $K=\CC$, let $G$ be the finite cyclic group $\ZZ/n\ZZ$, the element ${g\in G}$ its generator, the subgroup $H$ trivial, $\rho$ the trivial character of $H$, and $\pi$ the direct sum of a trivial infinite-dimensional representation of~$G$ with a non-trivial character $\psi$ of $G$. Then condition~(ii) of Lemma~\ref{lemma:extend} holds true. We have $H'=G$ and there are two possible options for the representation $\rho'$: the trivial character and the character $\psi$. Note that the vector space ${\Hom_{H'}(\rho',\pi)}$
is infinite-dimensional in the first case, while it is one-dimensional in the second case.
\end{example}

\begin{remark}\label{remark:mistake}
In particular, Example~\ref{examp:extend} shows that~\cite[Lemma~6]{Brown} is not correct (the mistake in the proof is that one uses an averaging operator which might vanish). \end{remark}

\section{Irreducibility of induced representations}\label{sect:irred}

\subsection{Irreducibility vs. Schur irreducibility}\label{sect:irr}

\begin{definition}\label{defin:scalar}
A representation $\pi$ of $G$ is called {\it Schur irreducible} if we have~${\End_{G} (\pi) = K}$.
\end{definition}

The following statement is an analog of classical Schur's lemma; for a proof see, e.g.,~\cite[Claim 2.11]{BZ} or~\cite[Chapter 5, \S~4.2]{Bernst}.

\begin{proposition}\label{prop:schur}
Suppose that the field $K$ is algebraically closed and uncountable. Then any countably dimensional irreducible representation over $K$ of an arbitrary group is Schur irreducible.
\end{proposition}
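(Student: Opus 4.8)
The plan is to run the classical argument (going back to Dixmier). Write $D=\End_{G}(\pi)$. The usual form of Schur's lemma shows first that $D$ is a division algebra over $K$: any non-zero $G$-equivariant endomorphism of $\pi$ has zero kernel and non-zero image, hence is an isomorphism by irreducibility of $\pi$, so every non-zero element of $D$ is invertible. Scalar operators embed $K$ into the centre of $D$.

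Next I would bound $\dim_{K}D$. Fix a non-zero vector $v$ in the representation space of $\pi$. The evaluation map $D\to\pi$, $\varphi\mapsto\varphi(v)$, is $K$-linear and injective: if $\varphi(v)=0$, then $\ker\varphi$ is a non-zero subrepresentation of $\pi$, hence equals the whole space, so $\varphi=0$. Therefore $\dim_{K}D\leqslant\dim_{K}\pi\leqslant\aleph_{0}$, that is, $D$ is at most countably dimensional over $K$.

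Now suppose, towards a contradiction, that $D\neq K$, and pick $x\in D\smallsetminus K$. Since $K$ is algebraically closed, $x$ cannot be algebraic over $K$ (an algebraic element is a root of a polynomial over $K$, which splits into linear factors, forcing $x\in K$). Hence $x$ is transcendental over $K$, the subring $K[x]\subset D$ is a polynomial ring in one variable, and, since $D$ is a division algebra, the subfield $K(x)\subset D$ is the field of rational functions in one variable over $K$. But the elements $(x-\lambda)^{-1}$, for $\lambda$ ranging over $K$, are $K$-linearly independent in $K(x)$ (clear denominators and compare), so $\dim_{K}K(x)\geqslant|K|$, which is uncountable by hypothesis. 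This contradicts $\dim_{K}D\leqslant\aleph_{0}$, and we conclude $D=K$, i.e.\ $\pi$ is Schur irreducible.

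The proof is entirely elementary, with no real obstacle; the only points demanding a little care are the injectivity of the evaluation map (which is exactly where countable dimensionality of $\pi$ enters) and the linear independence of $\{(x-\lambda)^{-1}\}_{\lambda\in K}$, and one should note that both hypotheses on $K$ are genuinely used — algebraic closedness to rule out algebraic $x$, and uncountability to make $|K|$ exceed $\aleph_{0}$.
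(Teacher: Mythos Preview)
Your argument is correct and is precisely the classical Dixmier argument. The paper does not give its own proof of this proposition but refers to \cite[Claim~2.11]{BZ} and \cite[Chapter~5, \S~4.2]{Bernst}, where essentially the same reasoning appears; so your approach coincides with the intended one.
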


The following examples show that Proposition~\ref{prop:schur} is not valid for an arbitrary field $K$, even if one relaxes the condition $\End_G(\pi)=K$ to finite-dimensionality of $\End_G(\pi)$ over $K$.

\begin{example}
\hspace{0cm}
\begin{itemize}
\item[(i)]
Suppose that the field $K$ is algebraically closed and countable. By~${K(t)}$ denote the field of rational functions of $t$ over $K$.
Let $G$ be the group~${K(t)^*}$ of non-zero rational functions and let $\pi=K(t)$ with the action of~$G$ given by multiplication of rational functions. Then $\pi$ is countably dimensional and irreducible, while $\End_G(\pi)=K(t)\ne K$.
\item[(ii)]
Suppose that there is an extension of fields $K\subset L$ such that $L$ is infinitely countably dimensional as a $K$-vector space (the field $K$ might be uncountable).
Let $G=L^*$ and $\pi=L$ with the action of~$G$ given by multiplication of elements of $L$. Then $\pi$ is countably dimensional over~$K$ and irreducible, while $\End_G(\pi)=L\ne K$.
\end{itemize}
\end{example}

\begin{remark}\label{remark:schur}
It follows from Proposition~\ref{prop:schur} that for countable groups, irreducibility implies Schur irreducibility over an algebraically closed uncountable field.
\end{remark}

\medskip

In general, Schur irreducibility does not imply irreducibility as the following example shows.

\begin{example}\label{examp:contr}
Suppose that a proper subgroup $H\subset G$ satisfies $S(H)=H$ (see Example~\ref{exam:s}). In particular, the index of $H$ in $G$ is infinite. Suppose that the field $K$ is algebraically closed and
uncountable. Let $\tau$ be a finite-dimensional (irreducible) representation of $G$ such that $\tau|_H$ is irreducible. Consider the representation
${\pi=\ind_H^G(\tau\vert_H)}$ of $G$. By Corollary~\ref{lemma:index} and Proposition~\ref{prop:schur}, the representation~$\pi$ is Schur irreducible.

On the other hand, by the isomorphism~\eqref{eq:Frob1}, there is a non-zero morphism of representations from~$\pi$ to~$\tau$. This morphism is not an isomorphism, because the dimension
of $\pi$ is infinite and the dimension of $\tau$ is finite. Thus $\pi$ is not irreducible.
\end{example}

However the next result claims that a certain bound on endomorphisms still implies irreducibility for a wide range of representations. This fact is essential for our proof of the main theorem (see Subsection~\ref{subsect:proof}).

\begin{proposition}\label{lemma}
Suppose that $H$ is normal in $G$, a representation $\rho$ of~$H$ is irreducible, and the natural homomorphism $\End_H(\rho)\to \End_G\big(\ind_H^G(\rho)\big)$ is an isomorphism. Then~$\ind_H^G(\rho)$ is irreducible.
\end{proposition}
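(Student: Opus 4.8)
The plan is to show that any non-zero subrepresentation $\pi' \subset \ind_H^G(\rho)$ is everything. First I would restrict to $H$ and use Mackey's formula~\eqref{eq:mackey}, which since $H$ is normal takes the simple form $\ind_H^G(\rho)|_H \simeq \bigoplus_{\bar g \in H\backslash G/H} \rho^g$ (because $H^g \cap H = H$ when $H$ is normal). Thus $\ind_H^G(\rho)|_H$ is a direct sum of the irreducible representations $\rho^g$ of $H$, indexed by $H\backslash G/H = G/H$. The key structural input is that any $H$-subrepresentation of such a sum of irreducibles is itself a direct sum of some sub-collection of isotypic components; in particular $\pi'|_H$ is a sum over a subset of the $\rho^g$-isotypic pieces.

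Next I would bring in the hypothesis on endomorphisms. Since $\rho$ is finite-dimensional (being irreducible with $\End_H(\rho)$ a finite-dimensional algebra — wait, I must be careful here: the statement does not assume $\rho$ finite-dimensional, so I should instead argue directly via Corollary~\ref{lemma:index}'s analogue or via the isomorphism~\eqref{eq:end}). The hypothesis that $\End_H(\rho) \xrightarrow{\sim} \End_G(\ind_H^G(\rho))$ is an isomorphism, combined with~\eqref{eq:end}, tells us that for every $\bar g \ne \bar e$ in $H\backslash G/H$ we have $\Hom_H(\rho, \rho^g) = 0$, i.e.\ the representations $\rho^g$ are pairwise non-isomorphic for distinct cosets $gH$. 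Therefore the isotypic decomposition of $\ind_H^G(\rho)|_H$ is exactly the decomposition into the lines $\rho^g$, one for each coset, each appearing with multiplicity one. So a non-zero $G$-subrepresentation $\pi'$ restricts on $H$ to $\bigoplus_{gH \in A} \rho^g$ for some non-empty subset $A \subset G/H$.

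Finally I would use $G$-invariance of $\pi'$ to show $A = G/H$. The group $G$ permutes the summands $\rho^g$ of $\ind_H^G(\rho)|_H$ transitively: concretely, $\pi(g_0)$ carries the $\rho^g$-component isomorphically onto the $\rho^{g_0 g}$-component (this is just the definition of finite induction together with the fact that right translation by $g_0$ sends the coset $Hg$ to $Hgg_0$, and one tracks how the $H$-isotypic pieces move). Since $\pi'$ is $G$-stable and contains at least one such summand, it must contain all of them, so $A = G/H$ and $\pi' = \ind_H^G(\rho)$. The main obstacle, and the point requiring the most care, is justifying that the isotypic decomposition on $H$ is "rigid" enough that a $G$-subrepresentation must be a union of whole isotypic components — this needs the multiplicity-one fact extracted from the endomorphism hypothesis, plus the observation (true because $\rho$ is irreducible, hence any $H$-subrepresentation that meets the $\rho^g$-component in a non-zero $H$-submodule contains that whole component) that there is no room for "partial" pieces. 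A secondary subtlety is handling the possibility that $\rho$ is infinite-dimensional, where one cannot simply invoke semisimplicity of finite-dimensional $H$-modules; but the only property of $\rho$ used is its irreducibility, so the argument goes through verbatim with $\rho^g$ in place of a one-dimensional line.
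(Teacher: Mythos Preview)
Your strategy is correct and matches the paper's: both extract from the endomorphism hypothesis, via the isomorphism~\eqref{eq:end}, that the irreducible $H$-representations $\rho^g$ are pairwise non-isomorphic for distinct $\bar g\in G/H$, and both finish by using that $G$ permutes the summands $V_{\bar g}$ transitively. The only difference is how the step you flag as the ``main obstacle'' is handled. You appeal to the structural principle that an $H$-submodule of a multiplicity-free direct sum of irreducibles is necessarily a partial sum $\bigoplus_{\bar g\in A}V_{\bar g}$ (a standard fact about isotypic components of semisimple modules). The paper instead proves this by hand at the level of a single vector: given a nonzero $v=\sum_{\bar g}v_{\bar g}$ with at least two nonzero components, it chooses $P\in K[H]$ in the annihilator of one $v_{\bar g_1}$ but not of another $v_{\bar g_2}$ (possible precisely because $\rho^{g_1}\not\simeq\rho^{g_2}$, so the annihilator ideals differ), whence $Pv$ has strictly fewer nonzero components; iterating reaches a vector supported in a single $V_{\bar g}$, which then generates $V$ by irreducibility of $\rho^g$ and transitivity. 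This annihilator trick is exactly what makes precise the point you left informal: your phrase ``meets the $\rho^g$-component in a non-zero $H$-submodule'' must mean nontrivial \emph{intersection}, and showing that $\pi'$ genuinely intersects (not merely projects onto) each $V_{\bar g}$ in its support is the content the paper supplies explicitly.
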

\begin{proof}
By $V$ denote the representation space of $\ind_{H}^{G} (\rho)$. Note that the representation $\ind_{H}^{G} (\rho)$ is irreducible if and only if any non-zero vector $v \in V$ generates~$V$ as a representation of $G$. Let us show that this condition holds true.

Since $H$ is normal in $G$, we have $H \backslash G / H = G / H$ and for any $g \in G$, there are equalities $H^g = H = H^g \cap H$. Therefore the isomorphisms~\eqref{eq:mackey},~\eqref{eq:end} take the forms
\begin{equation}\label{eq:mackey-norm}
\ind ^{G}_{H} (\rho) \vert_{H} \simeq  \bigoplus_{\bar g \in  G / H} \rho^{g} \,,
\end{equation}
\begin{equation}\label{hom}
\End_{G} \big( \ind^{G}_{H}(\rho) \big) \simeq \bigoplus_{ \bar g \in  G / H }  \Hom_{H} (\rho, \rho^{g} ) \, ,
\end{equation}
respectively. For every $\bar g\in G/ H$, by $V_{ \bar g}$ denote the representation space of $\rho^{g}$. In this notation, the isomorphism~\eqref{eq:mackey-norm} becomes
\begin{equation}\label{eq:mackey-normV}
V\simeq \bigoplus_{\bar g\in G/H} V_{\bar g}\,.
\end{equation}

Consider a non-zero vector $v \in V$. By the isomorphism~\eqref{eq:mackey-normV}, $v$ can be written as a sum
$$
v  = \sum_{\bar g \in G / H} v_{\bar g}\,,\qquad v_{\bar g} \in V_{\bar g}\,,
$$
where only finitely many summands are non-zero. Let $k$ be the number of the non-zero summands. Suppose that $k\geqslant 2$.

Let $\bar g\in G / H$ be such that $v_{\bar g}\ne 0$. By $I_{\bar g}$ denote the kernel of the action of the group algebra~$K[H]$ on the vector $v_{\bar g}$. Since $\rho$ is irreducible, the representation~$\rho^{ g}$ of~$H$ is irreducible as well, whence $v_{\bar g}$ generates $V_{\bar g}$ as a representation of $H$. Consequently we have an isomorphism of representations of $H$
$$
K[H] / I_{\bar g}\simeq \rho^{g}\,.
$$

Further, the isomorphism ${\End_H(\rho)\simeq \End_G\big(\ind^{G}_{H}(\rho)\big)}$ and the isomorphism~\eqref{hom} imply that the irreducible representations $\rho^{ g}$, $\bar g \in G / H$, are pairwise non-isomorphic. Therefore if $v_{\bar g_1}\ne 0$ and $v_{\bar g_2}\ne 0$, then the ideals $I_{\bar g_1}$ and $I_{\bar g_2}$ are different non-zero ideals. Permuting $g_1$ and $g_2$, if needed, we see that there is an element $P \in K[H]$ such that $P\in I_{\bar g_1}$ and $P\notin I_{\bar g_2}$, that is, $P(v_{\bar g_1}) = 0$ and $P(v_{\bar g_2}) \ne 0$ (actually, none of the ideals~$I_{\bar g_1}$ and~$I_{\bar g_2}$ contains another one, because the representations~$\rho^{g_1}$ and~$\rho^{g_2}$ are irreducible). By construction, the vector
$$
P(v)=\sum_{\bar g\in H\backslash G}P(v_{\bar g})\,,\qquad P(v_{\bar g})\in V_{\bar g}\,,
$$
is non-zero and has a strictly less number of non-zero summands with respect to the decomposition~\eqref{eq:mackey-normV}.

Thus we may suppose that $k = 1$, that is, $v = v_{\bar g}\ne 0$ for some $\bar g\in G / H$. As explained above, the vector $v_{\bar g}$ generates $V_{\bar g}$ as a representation of $H$. Moreover, the action of an element $g' \in G$ sends $v_{\bar g}$ to a non-zero vector $v_{\bar g \bar g'}\in V_{\bar g \bar g'}$, which, in turn, generates $V_{\bar g \bar g'}$ as a representation of $H$. It follows that~$v_{\bar g}$ generates~$V$ as a representation of $G$, which completes the proof of the proposition.
\end{proof}

A particular case of Proposition~\ref{lemma} was proved by Arnal and Parshin~\cite[Theorem 2]{Parshin-Arnal}.

\begin{remark}\label{remark:equivSchur}
The assumptions of Proposition~\ref{lemma} are equivalent to irreducibility of $\rho$ and Schur irreducibility of $\ind_H^G(\rho)$ in the following two cases: $\rho=\chi$ is a character; the group $G$ is countable and the field $K$ is algebraically closed and uncountable (see Remark~\ref{remark:schur}). Moreover, if $K$ is algebraically closed and uncountable, then the converse to the implication of Proposition~\ref{lemma} holds true.
\end{remark}

Example~\ref{examp:contr} shows that Proposition~\ref{lemma} does not hold when the subgroup~${H\subset G}$ is not necessarily normal. Further, the next example shows that the converse to the implication of Proposition~\ref{lemma} does not hold over an arbitrary field $K$.

\begin{example}
Let $K=\QQ(i)$, $G=\ZZ/8\ZZ$, $H=\ZZ/4\ZZ$, and $\rho=\chi$ be a primitive character of $\ZZ/4\ZZ$ over $K$. The two-dimensional representation~${\ind_H^G(\chi)}$ is irreducible, because the character $\chi$ does not extend to a character of~$G$ over $K$. On the other hand, we have $\End_G\big(\ind_H^G(\chi)\big)\simeq K(\zeta)$, where~$\zeta$ is a primitive root of unity of degree $8$, thus the natural homomorphism ${\End_H(\chi)\to \End_G\big(\ind_H^G(\chi)\big)}$ is not an isomorphism.
\end{example}

\medskip

Proposition~\ref{lemma} implies the following general result.

\begin{corollary} \label{theorem:irr}
Suppose that there exists a sequence of subgroups
$$
G = G_0 \supset G_{1} \supset \ldots \supset G_{n-1} \supset G_n = H \,,
$$
such that $G_{i}$ is normal in $G_{i-1}$ for any $i$, $1 \leqslant i \leqslant n$. Suppose that a representation $\rho$ of $H$ is irreducible and the natural homomorphism ${\End_H(\rho)\to \End_G\big(\ind_H^G(\rho)\big)}$ is an isomorphism. Then $\ind_{H}^{G} (\rho)$ is irreducible.
\end{corollary}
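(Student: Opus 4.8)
The plan is to prove Corollary~\ref{theorem:irr} by induction on the length~$n$ of the subnormal series, using Proposition~\ref{lemma} at each step. The base case $n=0$ is vacuous ($G=H$), and the case $n=1$ is exactly Proposition~\ref{lemma}. So the main work is the inductive step: assuming the statement holds for chains of length $n-1$, deduce it for a chain of length~$n$.

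For the inductive step, the natural thing is to factor the induction through the intermediate subgroup~$G_1$. Writing $\sigma = \ind_{H}^{G_1}(\rho)$, we have by transitivity of finite induction that $\ind_{H}^{G}(\rho) \simeq \ind_{G_1}^{G}(\sigma)$. Now $G_1$ is normal in $G=G_0$, so if we can check that $\sigma$ is irreducible and that the natural homomorphism $\End_{G_1}(\sigma) \to \End_G\big(\ind_{G_1}^G(\sigma)\big)$ is an isomorphism, then Proposition~\ref{lemma} applied to the normal subgroup $G_1 \triangleleft G$ yields irreducibility of $\ind_{G_1}^G(\sigma) \simeq \ind_H^G(\rho)$, and we are done. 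The chain $G_1 \supset G_2 \supset \dots \supset G_n = H$ has length $n-1$ and each $G_i$ is normal in $G_{i-1}$, so the inductive hypothesis applies to $(G_1, H, \rho)$ provided we know that the homomorphism $\End_H(\rho) \to \End_{G_1}\big(\ind_H^{G_1}(\rho)\big) = \End_{G_1}(\sigma)$ is an isomorphism; this gives irreducibility of $\sigma$.

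The heart of the argument, then, is the following compatibility: the given hypothesis that $\End_H(\rho) \to \End_G\big(\ind_H^G(\rho)\big)$ is an isomorphism should force \emph{both} of the intermediate maps $\End_H(\rho) \to \End_{G_1}(\sigma)$ and $\End_{G_1}(\sigma) \to \End_G\big(\ind_{G_1}^G(\sigma)\big)$ to be isomorphisms. The key point is that all three maps are injective (Remark~\ref{rmk:injend}) and their composite is $\End_H(\rho) \to \End_G\big(\ind_H^G(\rho)\big)$; a diagram of injections whose composite is an isomorphism forces each factor to be an isomorphism. Concretely: the composite $\End_H(\rho) \hookrightarrow \End_{G_1}(\sigma) \hookrightarrow \End_G\big(\ind_{G_1}^G(\sigma)\big)$ is identified, via transitivity of induction, with $\End_H(\rho) \xrightarrow{\sim} \End_G\big(\ind_H^G(\rho)\big)$; since it is surjective, the second map must be surjective, hence an isomorphism; and since the second map is injective, the first map must also be surjective, hence an isomorphism. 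One subtlety to handle with care is that in order to even speak of the map $\End_{G_1}(\sigma) \to \End_G\big(\ind_{G_1}^G(\sigma)\big)$ as a "natural homomorphism $\End_{H'}(\tau) \to \End_G(\ind_{H'}^G \tau)$" in the sense of Remark~\ref{rmk:injend}, one should note $\sigma$ need not be finite-dimensional, but the injectivity statement of Remark~\ref{rmk:injend} and the identification of $\tau$ as a direct summand of $\ind_{H'}^G(\tau)|_{H'}$ via Mackey's formula~\eqref{eq:mackey} hold with no finiteness hypothesis on~$\tau$.

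The main obstacle I anticipate is purely bookkeeping: verifying that the various instances of "the natural homomorphism" genuinely compose as claimed, i.e.\ that the triangle
\[
\End_H(\rho) \longrightarrow \End_{G_1}\big(\ind_H^{G_1}(\rho)\big) \longrightarrow \End_G\big(\ind_{G_1}^G \ind_H^{G_1}(\rho)\big)
\]
commutes with $\End_H(\rho) \to \End_G\big(\ind_H^G(\rho)\big)$ under the transitivity isomorphism $\ind_G^G \ind_{G_1}^{G_1}$. This is a compatibility of Frobenius reciprocity with iterated induction and is standard, but must be invoked cleanly. Once that is in place, the proof is a two-line induction. An alternative, essentially equivalent route avoids transitivity of endomorphism maps by arguing directly: apply the inductive hypothesis to get irreducibility of $\sigma = \ind_H^{G_1}(\rho)$ as a representation of $G_1$ (which requires the intermediate Schur-type hypothesis, extracted as above), then apply Proposition~\ref{lemma} to $G_1 \triangleleft G$ and $\sigma$. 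I would present the transitivity-of-injections version since it makes transparent why the single global hypothesis suffices.
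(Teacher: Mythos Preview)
Your proposal is correct and follows essentially the same strategy as the paper: induction on~$n$, transitivity of induction, and the observation (via Remark~\ref{rmk:injend}) that the natural maps on endomorphism algebras are injective, so if their composite is an isomorphism then so is each factor. The only cosmetic difference is that you peel off the top of the chain (factoring through~$G_1$, applying the inductive hypothesis to $G_1\supset\cdots\supset H$ and then Proposition~\ref{lemma} to $G_1\triangleleft G$), whereas the paper peels off the bottom (factoring through~$G_{n-1}$, applying Proposition~\ref{lemma} to $H\triangleleft G_{n-1}$ first and then the inductive hypothesis to $G\supset\cdots\supset G_{n-1}$); the content is identical.
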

\begin{proof}
The proof is by induction on $n$. Combining the isomorphism of representations
$$
\ind_{H}^{G}(\rho) \simeq  \ind^{G}_{G_{n-1}} \big( \ind_{H}^{G_{n-1}}(\rho) \big)
$$
with Remark~\ref{rmk:injend}, we see that the natural homomorphism ${\End_H(\rho)\to \End_{G_{n-1}}\big(\ind_H^{G_{n-1}}(\rho)\big)}$ is an isomorphism. Therefore, by Proposition~\ref{lemma}, the representation $\ind_{H}^{G_{n-1}} (\rho)$ is irreducible. We conclude by the induction hypothesis applied to the subgroup~${G_{n-1}\subset G}$.
\end{proof}

\subsection{Induced representations of nilpotent groups}\label{sect:nilp}

Suppose that $G$ is a nilpotent group, that is, its lower central series is finite:
$$ G=
\gamma_0(G) \supset \gamma_{1}(G) \supset \ldots \supset \gamma_{n-1}(G) \supset \gamma_n(G) = \{ e \} \,  .$$

\begin{lemma}\label{lemma:filter_nilp}
For any subgroup $H\subset G$, there exist a sequence of subgroups
$$
G = G_0 \supset G_{1} \supset \ldots \supset G_{n-1} \supset G_n = H \,
$$
such that $G_{i}$ is normal in $G_{i-1}$ for any $i$, $1 \leqslant i \leqslant n$.
\end{lemma}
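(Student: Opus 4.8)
The plan is to take $G_i := H\,\gamma_i(G)$ for $0\leqslant i\leqslant n$. Since each $\gamma_i(G)$ is a normal subgroup of $G$, the product $H\,\gamma_i(G)$ is again a subgroup of $G$; from $\gamma_{i-1}(G)\supset\gamma_i(G)$ we get the descending chain $G_0\supset G_1\supset\ldots\supset G_n$, and the endpoints are $G_0 = H\,\gamma_0(G) = H\,G = G$ and $G_n = H\,\gamma_n(G) = H\cdot\{e\} = H$, as required.

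It then remains only to verify that $G_i$ is normal in $G_{i-1}$ for each $i$. The subgroup $G_{i-1}=H\,\gamma_{i-1}(G)$ is generated by $H$ together with $\gamma_{i-1}(G)$, so it suffices to show that both $H$ and $\gamma_{i-1}(G)$ normalize $G_i = H\,\gamma_i(G)$. For $H$ this is immediate: $H$ normalizes itself, and it normalizes $\gamma_i(G)$ since the latter is normal in $G$, hence $H$ normalizes the product $H\,\gamma_i(G)$. For $\gamma_{i-1}(G)$, take an element $c\in\gamma_{i-1}(G)$ and an element $h\gamma\in H\,\gamma_i(G)$ with $h\in H$ and $\gamma\in\gamma_i(G)$. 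Writing $c(h\gamma)c^{-1}=\big(h\,[h^{-1},c]\big)\big(c\gamma c^{-1}\big)$, we note that $c\gamma c^{-1}\in\gamma_i(G)$ by normality, while $[h^{-1},c]\in[G,\gamma_{i-1}(G)]=\gamma_i(G)$ by the defining property of the lower central series. Hence $c(h\gamma)c^{-1}\in H\,\gamma_i(G)=G_i$, so $\gamma_{i-1}(G)$ normalizes $G_i$, and therefore so does $G_{i-1}$.

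I do not expect a serious obstacle here: the argument is entirely formal once one chooses the chain $G_i=H\,\gamma_i(G)$. The only point requiring a little care is the final normalization check, and it reduces cleanly to the two standard facts that $HN$ is a subgroup whenever $N$ is normal and that $[G,\gamma_{i-1}(G)]\subset\gamma_i(G)$.
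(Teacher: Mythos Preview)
Your proof is correct and takes essentially the same approach as the paper: the paper defines $G_i=\langle H,\gamma_i(G)\rangle$, which coincides with your $H\,\gamma_i(G)$ since $\gamma_i(G)$ is normal in $G$. The normality check is also the same in substance---the paper verifies $[G_{i-1},G_i]\subset G_i$ by listing the four commutator containments on generators, while you phrase it as conjugation by elements of $H$ and of $\gamma_{i-1}(G)$; both reduce to the single key fact $[G,\gamma_{i-1}(G)]\subset\gamma_i(G)$.
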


\begin{proof}
Let $G_i = \langle H, \gamma_{i}(G) \rangle$ be the subgroup of $G$ generated by $H$ and $\gamma_{i}(G)$, $0 \leqslant i \leqslant n$. In order to prove that $G_i$ is normal in $G_{i-1}$, it is enough to show that $[G_{i-1}, G_{i}] \subset G_{i}$.
This follows from the embeddings
$$
[H, H] \subset H\subset G_i\, ,
$$
$$
[\gamma_{i-1}(G),  H] \subset [\gamma_{i-1}(G),  G] =  \gamma_{i}(G) \subset G_{i}\,,
$$
$$
[\gamma_{i-1}(G),  \gamma_{i}(G)] \subset [G, \gamma_{i}(G)] =  \gamma_{i+1}(G) \subset G_{i}\, ,
$$
$$
[H, \gamma_{i}(G)]  \subset [G, \gamma_{i}(G)] =  \gamma_{i+1}(G) \subset G_{i}\, .
$$
\end{proof}

\medskip

Combining Corollaries~\ref{lemma:index} and~\ref{theorem:irr} with Lemma~\ref{lemma:filter_nilp}, we obtain the following useful result.

\begin{theorem}\label{theor:char}
Let $G$ be a nilpotent group and $(H,\rho)$ be an irreducible pair (see Definition~\ref{definition:weight}(i)). Suppose that ${S(H, \rho) = H}$ (see Definition~\ref{def:s}(i)). Then the representation $\ind^G_H (\rho)$ of $G$ is irreducible.
\end{theorem}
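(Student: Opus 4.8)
The plan is to obtain the statement as a straightforward assembly of Corollary~\ref{lemma:index}, Lemma~\ref{lemma:filter_nilp}, and Corollary~\ref{theorem:irr}; essentially all the substantive work has already been done in Proposition~\ref{lemma} and its consequences.

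First I would unpack the hypotheses. Since $(H,\rho)$ is an irreducible pair, Definition~\ref{definition:weight}(i) tells us that $\rho$ is a finite-dimensional irreducible representation of $H$. Hence Corollary~\ref{lemma:index} is applicable, and the assumption $S(H,\rho)=H$ is exactly condition~(ii) there; therefore condition~(i) holds as well, i.e.\ the natural homomorphism $\End_H(\rho)\to\End_G\big(\ind_H^G(\rho)\big)$ is an isomorphism.

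Next, since $G$ is nilpotent, Lemma~\ref{lemma:filter_nilp} provides a finite chain of subgroups $G=G_0\supset G_1\supset\ldots\supset G_{n-1}\supset G_n=H$ with $G_i$ normal in $G_{i-1}$ for every $i$. This is precisely the kind of subnormal chain required in the hypotheses of Corollary~\ref{theorem:irr}. I would then feed this chain, together with the irreducibility of $\rho$ and the isomorphism of endomorphism algebras established in the previous step, into Corollary~\ref{theorem:irr}, which yields that $\ind_H^G(\rho)$ is irreducible.

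I do not expect any genuine obstacle: the real content lies in Proposition~\ref{lemma} (Schur irreducibility implies irreducibility for finite induction from a normal subgroup), its iteration in Corollary~\ref{theorem:irr}, and the nilpotent-group input of Lemma~\ref{lemma:filter_nilp}; the present theorem merely combines them. The only points meriting a moment's care are verifying that ``irreducible pair'' really does force $\rho$ to be finite-dimensional, so that Corollary~\ref{lemma:index} applies, and noting that the chain from Lemma~\ref{lemma:filter_nilp} genuinely terminates at $G_n=H$ (rather than at some subgroup merely containing $H$), which it does because $\gamma_n(G)=\{e\}$ so that $\langle H,\gamma_n(G)\rangle=H$.
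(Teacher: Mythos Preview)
Your proposal is correct and matches the paper's approach exactly: the paper states that the theorem follows by ``combining Corollaries~\ref{lemma:index} and~\ref{theorem:irr} with Lemma~\ref{lemma:filter_nilp},'' which is precisely the assembly you describe. Your additional remarks (checking finite-dimensionality of $\rho$ and that the chain terminates at $H$) are accurate and make the verification slightly more explicit than the paper's one-line justification.
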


Recall that if the field $K$ is algebraically closed, then any finite-dimensional irreducible representation over $K$ is Schur irreducible. Therefore in this case, Theorem~\ref{theor:char} claims the following: Schur irreducibility implies irreducibility for representations of type $\ind_H^G(\rho)$, where $(H,\rho)$ is an irreducible pair in a finitely generated nilpotent group (if, in addition, $K$ is uncountable, then the converse implication holds as well).

In the next subsection, we show that Schur irreducibility does not imply irreducibility for arbitrary representations of finitely generated nilpotent groups (even if representations are over an algebraically closed uncountable field).

\subsection{Example: the Heisenberg group}\label{subsect:Heis}

Recall that the {\it Heisenberg group} over a commutative unital ring is the group of $3\times 3$ upper triangular matrices with units on the diagonal and with coefficients in the ring. Put
$$
x =
\begin{pmatrix}
     1 & 1 & 0 \\
     0 & 1 & 0 \\
     0 & 0 & 1
\end{pmatrix}
, \quad
y =
\begin{pmatrix}
     1 & 0 & 0 \\
     0 & 1 & 1 \\
     0 & 0 & 1 \\
\end{pmatrix}
, \quad
z =
\begin{pmatrix}
     1 & 0 & 1 \\
     0 & 1 & 0 \\
     0 & 0 & 1 \\
\end{pmatrix}.
$$
We have the relation $xy=z\,yx$.

\medskip

Below we consider the Heisenberg group $G$ over the ring of integers. Fix a non-zero element $c\in K$. It turns out that representations of $G$ such that $z$ acts by $c$ admit the following geometric description.

\medskip

Let us denote by $R$ the $K$-algebra of Laurent polynomials~${K[t,t^{-1}]}$. The $K$-variety~${\GG_m=\Spec(R)}$ is the one-dimensional algebraic torus over $K$. Let $\gamma\colon R\to R$ be the automorphism of the $K$-algebra $R$ such that~${\gamma(t)=c\,t}$. Equivalently, $\gamma$ is the automorphism of $\GG_m$ given by the group translation by the element $c\in\GG_m$. Let $\Gamma$ be the cyclic abelian group generated by the automorphism $\gamma$. By construction, the group $\Gamma$ acts on the $K$-algebra~$R$ and on the algebraic variety $\GG_m$.

A {\it $\Gamma$-equivariant $R$-module} is an $R$-module $M$ together with a $K$-linear action of $\Gamma$ on $M$ such that $\gamma(fm)=\gamma(f)\gamma(m)$ for all elements $f\in R$, $m\in M$. Morphisms between $\Gamma$-equivariant $R$-modules are defined naturally. For instance, clearly, $R$ has a canonical structure of a $\Gamma$-equivariant $R$-module.

In geometric terms, a $\Gamma$-equivariant $R$-module is the same as a~$\Gamma$-equivariant quasi-coherent sheaf on $\GG_m$. In particular,~$R$ as a $\Gamma$-equivariant $R$-module corresponds to the structure sheaf of $\GG_m$ with its canonical $\Gamma$-equivariant structure.

\medskip

Let $\pi$ be a representation of $G$ such that $\pi(z)=c$ and $M$ the representation space of $\pi$. Consider an $R$-module structure on $M$ such that $t$ acts by the operator $\pi(y)$. Let $\gamma$ act on~$M$ by the operator $\pi(x)$. Then $M$ becomes a~$\Gamma$-equivariant \mbox{$R$-module}, because of the relation~${\pi(x)\pi(y)=c\,\pi(y)\pi(x)}$.

One checks easily that the assignment $\pi\longmapsto M$ defines an equivalence (actually, an isomorphism) between the category of representations of $G$ such that~$z$ acts by $c$ and the category of $\Gamma$-equivariant $R$-modules.

\medskip

Now suppose that the non-zero element $c\in K$ is not a root of unity. Let $P$ be the $R$-module that consists of all rational functions on $\GG_m$ that have poles of order at most one at the points $c^i\in\GG_m$, $i\in\ZZ$, and are regular elsewhere. Define also the $R$-module
$$
Q=\bigoplus_{i\in \ZZ}R/(t-c^i)\,.
$$
The corresponding quasi-coherent sheaf on $\GG_m$ is the direct sum of the skyscraper sheaves at the points $c^i$, $i\in \ZZ$.

The action of $\Gamma$ on $\GG_m$ leads to natural $\Gamma$-equivariant structures on~$P$ and~$Q$. Moreover, we have an exact sequence of $\Gamma$-equivariant $R$-modules
\begin{equation}\label{eq:extension}
0\longrightarrow R\longrightarrow P\longrightarrow Q\longrightarrow 0\,.
\end{equation}
The exact sequence~\eqref{eq:extension} does not split, because the $R$-module $Q$ is a torsion module and $R$ is torsion-free.

Let us show that $R$ and $Q$ are irreducible $\Gamma$-equivariant $R$-modules. Let~${I\subset R}$ be a $\Gamma$-equivariant submodule. Then $I$ is an ideal in $R$, being an $R$-submodule. On the other hand, for any $\Gamma$-equivariant module, its support on $\GG_m$ is invariant under the action of~$\Gamma$. Applying this to the $\Gamma$-equivariant module~$R/I$ and using that $c$ is not a root of unity, we obtain that either $I=0$, or $I=R$, whence $R$ is irreducible. Irreducibility of $Q$ is proved similarly.

Further, the $\Gamma$-equivariant $R$-modules $R$ and $Q$ are not isomorphic, being non-isomorphic \mbox{$R$-modules}. We see that $P$ is a non-trivial extension between two non-isomorphic irreducible $\Gamma$-equivariant $R$-modules $Q$ and $R$. In particular, $P$ is not irreducible.

\medskip

Let us prove that $P$ is Schur irreducible as a $\Gamma$-equivariant $R$-module. First we show that $R$ is Schur irreducible as a $\Gamma$-equivariant $R$-module. Indeed, the ring of endomorphisms of $R$ as an $R$-module is isomorphic to~$R$. Further, the ring of endomorphisms of $R$ that respect the $\Gamma$-equivariant structure is identified with the $\Gamma$-invariant part of $R$. Since $c$ is not a root of unity, the $\Gamma$-invariant part of $R$ is just~$K$.

Now let $\varphi\colon P\to P$ be an endomorphism of $P$ as a $\Gamma$-equivariant $R$-module. The composition
$$
R\stackrel{\varphi|_R}\longrightarrow P\longrightarrow Q
$$
is equal to zero, because $R$ and $Q$ are non-isomorphic irreducible $\Gamma$-equivariant $R$-modules. Therefore $R$ is invariant under the action of $\varphi$. By Schur irreducibility of $R$, we see that $\varphi|_R=\lambda$ for an element $\lambda\in K$. The morphism $\varphi-\lambda\colon P\to P$ vanishes on $R$, whence it factors through $Q$. Since the exact sequence~\eqref{eq:extension} does not split, we see that $\varphi-\lambda=0$, that is, $\varphi=\lambda$ and $P$ is Schur irreducible.

\medskip

Using the above equivalence of categories, we see that Schur irreducibility does not imply irreducibility for (possibly, complex) representations of the Heisenberg group over~$\ZZ$.

\section{Main results}\label{sect:main}

\subsection{Monomial and finite weight representations}\label{subsect:stat}

Recall that a representation $\pi$ of $G$ is \emph{monomial} if there is a weight pair $(H,\chi)$ (see Definition~\ref{definition:weight}(i)) such that $\pi \simeq \ind_H^G (\chi)$.

\begin{proposition}\label{prop:weight}
Suppose that the group $G$ is countable and the field $K$ is algebraically closed and uncountable. Let $\pi$ be an irreducible representation of~$G$ over~$K$.
Then the following holds true:
\begin{itemize}
\item[(i)]
if $\pi$ is isomorphic to a finitely induced representation $\ind_H^G(\rho)$, where $H\subset G$ is a subgroup and $\rho$ is a representation of $H$, then the vector space~$\Hom_H(\rho,\pi|_H)$ is one-dimensional;
\item[(ii)]
if $\pi$ is monomial, then $\pi$ has finite weight (see Definition~\ref{defin:pipair}(ii)).
\end{itemize}
\end{proposition}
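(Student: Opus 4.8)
The plan is to derive both items from Frobenius reciprocity~\eqref{eq:Frob1} together with the generalized Schur lemma recorded in Proposition~\ref{prop:schur}, and then to obtain item~(ii) as an immediate consequence of item~(i).

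For item~(i), I would start from the given isomorphism $\pi\simeq\ind_H^G(\rho)$ and use~\eqref{eq:Frob1} to rewrite
$$
\Hom_H(\rho,\pi|_H)\;\simeq\;\Hom_G\big(\ind_H^G(\rho),\pi\big)\;\simeq\;\End_G(\pi)\,,
$$
so that the claim reduces to showing $\End_G(\pi)=K$. Here~\eqref{eq:Frob1} is available for an arbitrary representation $\rho$ of $H$, so there is no need to assume $\rho$ finite-dimensional or irreducible. The one point to verify before invoking Proposition~\ref{prop:schur} is that $\pi$ is countably dimensional: since $\pi$ is irreducible, any non-zero vector generates it, hence $\pi$ is a quotient of the group algebra $K[G]$, which is countably dimensional because $G$ is countable. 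Then Proposition~\ref{prop:schur} yields $\End_G(\pi)=K$, and item~(i) follows.

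For item~(ii), if $\pi$ is monomial then $\pi\simeq\ind_H^G(\chi)$ for some weight pair $(H,\chi)$; applying item~(i) with $\rho=\chi$ shows that $\Hom_H(\chi,\pi|_H)$ is one-dimensional, in particular non-zero and finite-dimensional, so $(H,\chi)$ is a finite $\pi$-weight pair (Definition~\ref{defin:pipair}(i)) and therefore $\pi$ has finite weight.

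I do not expect a genuine obstacle in this argument; the only step requiring a moment of care is the countable-dimensionality of $\pi$, which is precisely the hypothesis under which Proposition~\ref{prop:schur} applies, and keeping track of the fact that $\rho$ in item~(i) is allowed to be infinite-dimensional and reducible — harmless, since Frobenius reciprocity imposes no such restriction.
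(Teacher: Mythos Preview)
Your argument is correct and follows exactly the route indicated in the paper's own proof: Frobenius reciprocity~\eqref{eq:Frob1} reduces item~(i) to Schur irreducibility of~$\pi$, which the paper records as Remark~\ref{remark:schur} (a direct consequence of Proposition~\ref{prop:schur} once one observes, as you do, that an irreducible representation of a countable group is countably dimensional); item~(ii) is then immediate from item~(i). The only difference is that you have spelled out the countable-dimensionality step that the paper packages into Remark~\ref{remark:schur}.
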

\begin{proof}
Item~(i) follows from the isomorphism~\eqref{eq:Frob1} and Remark~\ref{remark:schur}. Item~(ii) follows directly from item~(i).
\end{proof}

\medskip

Here is our key result.

\begin{theorem} \label{theorem:key}
Let $G$ be a finitely generated nilpotent group and $\pi$ an irreducible representation of $G$ over an arbitrary field $K$ such that there is a finite $\pi$-irreducible pair (see Definition~\ref{defin:pipair}(i)). Then there is an irreducible pair~${(H,\rho)}$ (see Definition~\ref{definition:weight}(i)) such that $\pi\simeq\ind_H^G(\rho)$.
\end{theorem}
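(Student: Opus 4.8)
The plan is to start from a finite $\pi$-irreducible pair $(H',\rho')$ and to enlarge it, step by step, to an irreducible pair $(H,\rho)$ for which $\pi\simeq\ind_H^G(\rho)$. The mechanism for enlarging is Lemma~\ref{lemma:extend}: whenever we have a $\pi$-irreducible pair $(H_0,\rho_0)$ and an element $g\in G$ with $H_0^g=H_0$ and $\rho_0^g\simeq\rho_0$, we may replace $H_0$ by $\langle H_0,g\rangle$, provided either the pair is finite or $g$ has a power lying in $H_0$ (condition~(ii) of that lemma, which is automatic in a nilpotent group since $\langle H_0,g\rangle/H_0^*$ embeds into a finitely generated nilpotent, hence Noetherian, group). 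Passing from $(H_0,\rho_0)$ to a maximal pair that cannot be further enlarged in this way, we reach a pair $(H,\rho)$ with the property that there is no $g\in G\setminus H$ normalizing $H$ with $\rho^g\simeq\rho$; by Remark~\ref{remark:perf}(i)--(ii) this says precisely that $S(H,\rho)=H$, once we know $S(H,\rho)$ is a subgroup with $H$ normal in it. The latter is where finite generation and nilpotency of $G$ enter through Theorem~\ref{theor:S}: $S(H)$ is a subgroup containing $N_G(H)$ with finite index, $H$ is normal in $N_G(H)$, and $S(H,\rho)$ sits between $H$ and some finite-index overgroup; a stabilizer argument as in Remark~\ref{remark:perf}(ii) then gives that $S(H,\rho)$ is a subgroup with $H$ normal and of finite index in it, i.e.\ $(H,\rho)$ is a perfect pair.

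Once $S(H,\rho)=H$, Theorem~\ref{theor:char} immediately yields that $\ind_H^G(\rho)$ is an irreducible representation of $G$. It remains to produce an isomorphism $\pi\simeq\ind_H^G(\rho)$. By Frobenius reciprocity~\eqref{eq:Frob1}, the fact that $(H,\rho)$ is $\pi$-irreducible gives a non-zero morphism $\ind_H^G(\rho)\to\pi$. Since the source is irreducible and the target is irreducible, any non-zero morphism between them is an isomorphism. This closes the argument.

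The main obstacle, and the place where one must be careful, is the passage to a \emph{maximal} pair: enlarging via Lemma~\ref{lemma:extend} could in principle go on forever, and even if it terminates, the intermediate pairs $(H_0,\rho_0)$ need \emph{not} remain finite $\pi$-irreducible pairs — the dimension of $\Hom_{H_0}(\rho_0,\pi|_{H_0})$ may jump to infinity, as Example~\ref{examp:extend} warns. The resolution, as flagged in the introduction, is that one should not track finiteness of $\Hom$ but rather track the group-theoretic finiteness built into perfect pairs: one enlarges inside the ascending chain governed by the $H\mapsto H^*$ operation and $S(H,\cdot)$, which is finite by Proposition~\ref{lemma:finite-index} and Theorem~\ref{theor:S}, so the process terminates. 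Thus the induction should be organized on the (finite) index data of $H^*$ and $N_G(H)$, using condition~(ii) of Lemma~\ref{lemma:extend} to perform each enlargement step, rather than condition~(i); this is precisely the point where Brown's original strategy, relying on the false~\cite[Lemma~6]{Brown}, breaks down and must be replaced. Verifying that at the terminal pair one genuinely has $S(H,\rho)=H$ — and not merely that no single element enlarges it — is the delicate bookkeeping, handled by combining Proposition~\ref{prop:Brownnorm}, Theorem~\ref{theor:S}(ii),(iii), and the stabilizer description of $S(H,\rho)$ in Remark~\ref{remark:perf}(ii).
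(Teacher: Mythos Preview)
Your overall shape --- enlarge a $\pi$-irreducible pair until $S(H,\rho)=H$, then invoke Theorem~\ref{theor:char} and Frobenius reciprocity --- matches the paper, and your Step~4 is exactly right. But the enlargement mechanism you describe has a genuine gap.

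First, your parenthetical claim that condition~(ii) of Lemma~\ref{lemma:extend} ``is automatic in a nilpotent group'' is false. Condition~(ii) asks that $g^n\in H_0$ for some $n\geqslant 1$, i.e.\ that $g\in H_0^*$. Take $G=\ZZ$, $H_0=\{0\}$, $g=1$: no power of $g$ lies in $H_0$. Noetherianity of $\langle H_0,g\rangle/H_0^*$ has nothing to do with this; you need torsion, not finite generation. So you cannot simply ``use condition~(ii) to perform each enlargement step'' and dispense with condition~(i).

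Second, and more structurally, you never explain how to arrange that $H$ is normal in $S(H,\rho)$, which you yourself flag as a prerequisite for applying Remark~\ref{remark:perf}(ii). Theorem~\ref{theor:S} tells you $N_G(H)$ has finite index in $S(H)$, but it does \emph{not} say $H$ is normal in $S(H)$. The paper handles this by a move you omit entirely: starting from a maximal \emph{finite} $\pi$-irreducible pair $(H,\rho)$, it passes \emph{down} to the finite-index subgroup $H_0=\bigcap_{g\in S(H)}H^g$, which is normal in $S(H)=S(H_0)$ by construction, and takes $\rho_0$ an irreducible summand of $\rho|_{H_0}$. Now Remark~\ref{remark:perf}(ii) applies with $F=S(H_0)$. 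Showing that $(H_0,\rho_0)$ is perfect then requires an argument by contradiction that uses condition~(i) of Lemma~\ref{lemma:extend} --- the finiteness of the original pair $(H,\rho)$ --- to rule out $S(H_0,\rho_0)$ being too large. Only \emph{after} a perfect pair exists does one take a maximal perfect pair and enlarge via condition~(ii), which is now available precisely because perfection forces $S(H,\rho)/H$ to be finite.

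In short: the paper uses \emph{both} conditions of Lemma~\ref{lemma:extend}, at different stages and for different reasons, and interposes a shrinking step before any enlarging. Your single ascending-chain picture, with condition~(ii) doing all the work, does not go through.
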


The proof of Theorem~\ref{theorem:key} is given in Subsection~\ref{subsect:proof}. It consists in an explicit construction of a pair~${(H,\rho)}$ such that~${\pi\simeq\ind_{H}^G(\rho)}$. The construction goes as follows (we refer to steps in Subsection~\ref{subsect:proof}). We start with a maximal finite $\pi$-irreducible pair (see Step 1). Then we replace it by a certain finite index subgroup in order to get a perfect $\pi$-irreducible pair~${(H_0,\rho_0)}$ (see Step~2). Notice that $(H_0,\rho_0)$ is not necessarily finite. Now the existence of a perfect $\pi$-irreducible pair allows us to take a maximal perfect $\pi$-irreducible pair~${(H,\rho)}$ with respect to the order from Definition~\ref{definition:weight}(iii). We prove the equality $S(H,\rho)=H$ (see Step 3). Finally, Theorem~\ref{theor:char} implies that the representation $\ind_H^G(\rho)$ is irreducible and Frobenius reciprocity gives a non-zero morphism of irreducible representations $\ind_H^G(\rho)\to \pi$, which is necessarily an isomorphism (see Step~4).

\medskip

The following result is well-known and its proof essentially repeats that of~\cite[\S 8.5, Theorem 16]{Serre} (cf.~\cite[Lemma 1]{Brown}). We provide the proof for convenience of the reader.

\begin{proposition}\label{prop:indfin}
Let $G$ be a finitely generated nilpotent group and $\pi$ an irreducible representation of $G$ over an algebraically closed field $K$ such that $\pi$ is finite-dimensional. Then $\pi$ is monomial.
\end{proposition}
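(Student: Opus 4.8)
The plan is to induct on the nilpotency class of $G$ (or, equivalently, on a subnormal chain), mimicking the classical proof for finite nilpotent groups but taking care that all representation spaces remain finite-dimensional so that no pathology from the infinite-dimensional theory intervenes. Since $K$ is algebraically closed, $\pi$ is Schur irreducible, so $\pi(z)$ acts by a scalar for any central element $z$ of $G$; this is the usual starting point. If $G$ is abelian, then $\pi$ is one-dimensional and we are done (it is induced from itself). Otherwise, pick a central element $z$ of $G$ not equal to $e$; if $\pi$ factors through $G/\langle z\rangle$ we finish by induction on the order of a suitable quotient, but $G$ is infinite so instead the right move is to choose a normal subgroup structure: work with the center $Z(G)$, which is nontrivial since $G$ is nilpotent, and consider whether $\pi|_{Z(G)}$ is faithful modulo scalars or not.

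**The key steps, in order.** First I would reduce to the case where $\pi$ is not induced from any proper subgroup and derive a contradiction, or rather argue directly: since $G$ is finitely generated nilpotent and $\pi$ is finite-dimensional irreducible over algebraically closed $K$, the image $\pi(G)\subset\GL(V)$ is a finitely generated nilpotent linear group. Choose a maximal abelian normal subgroup $A$ of $G$ containing $Z(G)$ — more robustly, choose a subnormal chain and pick the first step where noncommutativity appears. The cleanest route: let $A\subset G$ be a normal subgroup, abelian, maximal among such, chosen so that $A$ strictly contains $Z(G)$ when $G$ is nonabelian (such $A$ exists: take the preimage of $Z(G/Z(G))$, which is abelian-by-nothing only if... — more carefully, take $A$ to be the center of the second-to-last term of the upper central series, or just any normal abelian subgroup not contained in the centralizer of all of $\pi$). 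Then decompose $\pi|_A = \bigoplus_{\chi\in\Omega} V_\chi$ into $A$-isotypic (here one-dimensional-character) components, $\Omega$ a finite set of characters of $A$ permuted transitively by $G$ (transitivity by irreducibility of $\pi$). If $|\Omega|>1$, then $\pi\simeq\ind_{G_\chi}^G(\pi_\chi)$ where $G_\chi=\Stab_G(\chi)$ is a proper subgroup of finite index and $\pi_\chi$ is the $G_\chi$-subrepresentation on $V_\chi$, which is irreducible; applying the induction hypothesis to $G_\chi$ (still finitely generated nilpotent, smaller in the induction parameter since $A\subset G_\chi$ properly, or argue by a separate downward induction) expresses $\pi_\chi$ as $\ind_H^{G_\chi}(\chi')$, and by transitivity of finite induction $\pi\simeq\ind_H^G(\chi')$. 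If $|\Omega|=1$, then $A$ acts by a single scalar character on all of $\pi$, so by maximality of $A$ one shows $A=Z(G)$ forces $G$ abelian — the point being that a normal abelian subgroup on which $\pi$ acts by a character can be enlarged inside $\ker(\text{projective action})$, contradicting maximality unless $G$ itself is abelian.

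**The main obstacle** is setting up the induction parameter correctly so that the Stabilizer subgroup $G_\chi$ is genuinely "smaller" while remaining finitely generated nilpotent — unlike the finite case there is no order to induct on. The fix is to induct on the nilpotency class together with the fact, used throughout the paper (e.g. Theorem~\ref{theor:S} relies on the Noetherian property), that finitely generated nilpotent groups are Noetherian, so one may instead induct on the Hirsch length (torsion-free rank plus the number of finite cyclic factors in a polycyclic series), which strictly decreases when passing from $G$ to a proper finite-index subgroup $G_\chi$ only if... it does not — so the correct parameter is: induct on the nilpotency class, and within a fixed class on the Hirsch length, noting that $G_\chi\subsetneq G$ with $A\subsetneq G_\chi$ either drops the class or, if $G_\chi=G$, then $A$ is central, handled by the $|\Omega|=1$ case. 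One must also verify the $|\Omega|=1$ base case carefully: if every normal abelian subgroup acts by scalars, then the projective image of $G$ in $\operatorname{PGL}(V)$ has trivial... no — one shows directly that $V$ is one-dimensional, because a finitely generated nilpotent group all of whose normal abelian subgroups act by scalars in an irreducible finite-dimensional representation must have that representation one-dimensional (take $g\notin Z(G)$, then $\langle Z(G),g\rangle$ is abelian normal if $g$ is chosen in $Z_2(G)$, giving a contradiction unless $\dim V=1$). I would write the proof by first disposing of the $\dim V=1$ reduction, then the isotypic decomposition and transitivity, then the induction step, flagging the Hirsch-length bookkeeping as the only delicate point.
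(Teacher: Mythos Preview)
Your overall architecture is the same as the paper's: pick a normal abelian subgroup $E$ not contained in the center, decompose $\pi|_E$ into character eigenspaces, let $H$ be the stabilizer of one character $\chi$, take $\rho$ to be the (irreducible) $H$-action on the $\chi$-isotypic piece, identify $\pi\simeq\ind_H^G(\rho)$, and recurse. The paper streamlines two points you fuss over: it reduces to $\pi$ faithful at the outset, so the chosen $E$ (the center of $G$ together with a non-central element of the penultimate term of the lower central series) automatically has $|\Omega|>1$; and it uses its own machinery (Theorem~\ref{theor:char} via $S(H,\rho)=H$) rather than quoting Clifford theory to see that $\ind_H^G(\rho)$ is irreducible.

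The genuine gap in your write-up is the induction parameter. You yourself notice that passing from $G$ to the finite-index subgroup $G_\chi$ need not lower the nilpotency class, and it certainly does not lower the Hirsch length (torsion-free rank is preserved under finite-index subgroups). Your proposed two-tier induction on class then Hirsch length does not terminate: the Heisenberg group over $\ZZ$ already has proper finite-index subgroups isomorphic to itself. The paper's fix is the obvious one you overlooked: induct on $\dim\pi$. When $|\Omega|>1$ one has $\dim\rho=\dim\pi/|\Omega|<\dim\pi$, so the recursion on the pair $(H,\rho)$ terminates. With that single change your argument goes through and coincides with the paper's.
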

\begin{proof}
The proof is by induction on the dimension of $\pi$. We can assume that the representation $\pi$ is faithful. There is an abelian normal subgroup $E\subset G$ that is not contained in the center of $G$. Indeed,~$E$ can be taken to be generated by the center of~$G$ and any non-central element in the previous term of the lower central series of~$G$.

Since $K$ is algebraically closed, there is a character $\chi$ of $E$ such that the vector space $\Hom_E(\chi,\pi|_E)$ is non-zero. Thus $(E,\chi)$ is a finite $\pi$-weight pair (see Definition~\ref{defin:pipair}(i)). Let $W$ be the $\chi$-isotypic subspace of the representation space of $\pi$ (see Remark~\ref{remark:extend}), that is, $W$ consists of all vectors in the representation space of $\pi$ on which the group $E$ acts by the character $\chi$.

Combining Remarks~\ref{remark:perf}(ii) and~\ref{remark:extend}, we obtain that the subset~${S(E,\chi)\subset G}$ is a subgroup and $W$ is invariant under the action of $S(E,\chi)$. Put $H=S(E,\chi)$ and let $\rho$ be a (non-zero) irreducible subrepresentation of the representation of $H$ on the finite-dimensional vector space $W$ (in particular, $\rho$ is a subrepresentation of $\pi|_H$). Clearly, the representation $\rho|_E$ of $E$ is $\chi$-isotypic (cf. Lemma~\ref{lemma:isot}).

One checks directly that there is an embedding $S(H,\rho)\subset S(E,\chi)$, whence we have~${S(H,\rho)=H}$. By Theorem~\ref{theor:char}, the representation~${\ind_H^G(\rho)}$ is irreducible. By the isomorphism~\eqref{eq:Frob1}, we have a non-zero morphism of representations~${\ind_H^G(\rho)\to \pi}$. Thus irreducibility of $\pi$ implies that this is an isomorphism.

Since $\pi$ faithful and $E$ is not contained in the center of $G$, we see that $\pi|_E$ is not $\chi$-isotypic, whence $\rho\ne \pi|_{H}$ and the dimension of $\rho$ is strictly less than the dimension of $\pi$. We conclude by the inductive hypothesis applied to the representation $\rho$ of $H$.
\end{proof}

Combining Theorem~\ref{theorem:key} with Proposition~\ref{prop:indfin}, we obtain the main result of the paper.

\begin{theorem} \label{theorem:main}
Let $G$ be a finitely generated nilpotent group and $\pi$ an irreducible representation of $G$ over an algebraically closed field $K$ such that $\pi$ has finite weight. Then $\pi$ is monomial.
\end{theorem}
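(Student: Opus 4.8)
The plan is to obtain the theorem as a short combination of Theorem~\ref{theorem:key}, Proposition~\ref{prop:indfin}, and transitivity of finite induction; the substantive work has already been done, so what remains is an assembly argument.

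First I would unwind the hypothesis. By Definition~\ref{defin:pipair}(ii), saying that $\pi$ has finite weight means there is a finite $\pi$-weight pair $(H',\chi')$, i.e.\ a subgroup $H'\subset G$ and a character $\chi'$ of $H'$ with $\Hom_{H'}(\chi',\pi|_{H'})$ non-zero and finite-dimensional. Since a character is a one-dimensional, hence finite-dimensional irreducible, representation, the pair $(H',\chi')$ is in particular a finite $\pi$-irreducible pair in the sense of Definition~\ref{defin:pipair}(i). Thus the hypothesis of Theorem~\ref{theorem:key} holds, and it provides an irreducible pair $(H,\rho)$ --- a subgroup $H\subset G$ together with a finite-dimensional irreducible representation $\rho$ of $H$ --- such that $\pi\simeq\ind_H^G(\rho)$.

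Next I would observe that $H$ is again a finitely generated nilpotent group: a finitely generated nilpotent group is Noetherian (as recalled in the proof of Theorem~\ref{theor:S}), so its subgroup $H$ is finitely generated, and subgroups of nilpotent groups are nilpotent. As $\rho$ is finite-dimensional and irreducible and $K$ is algebraically closed, Proposition~\ref{prop:indfin} applies to the representation $\rho$ of the group $H$ and yields a weight pair $(H_0,\chi_0)$ with $H_0\subset H$ and $\rho\simeq\ind_{H_0}^H(\chi_0)$. Finally, by transitivity of finite induction (already used in the proof of Corollary~\ref{theorem:irr}),
$$
\pi\simeq\ind_H^G(\rho)\simeq\ind_H^G\big(\ind_{H_0}^H(\chi_0)\big)\simeq\ind_{H_0}^G(\chi_0)\,,
$$
so $\pi$ is monomial, since $(H_0,\chi_0)$ is a weight pair in $G$.

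There is essentially no obstacle at this final step: the real content lies entirely in Theorem~\ref{theorem:key}, whose proof constructs $(H,\rho)$ by passing through maximal perfect $\pi$-irreducible pairs, and in the classical Proposition~\ref{prop:indfin}. The one point worth flagging is that Theorem~\ref{theorem:key} delivers only an irreducible pair, not a weight pair --- the representation $\rho$ may have dimension greater than one --- so one does genuinely need the finite-dimensional monomiality statement to refine $\rho$ down to a character before concluding.
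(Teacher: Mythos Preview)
Your proof is correct and follows essentially the same route as the paper: apply Theorem~\ref{theorem:key} to obtain $\pi\simeq\ind_H^G(\rho)$ for an irreducible pair $(H,\rho)$, then apply Proposition~\ref{prop:indfin} to $\rho$ to write it as $\ind_{H_0}^H(\chi_0)$, and conclude by transitivity of induction. You add a few helpful clarifications (why a finite $\pi$-weight pair is a finite $\pi$-irreducible pair, and why $H$ is again finitely generated nilpotent), but the argument is the same.
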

\begin{proof}
By Theorem~\ref{theorem:key}, there is an irreducible pair $(H,\rho)$ such that~${\pi\simeq\ind_H^G(\rho)}$. Since $\rho$ is finite-dimensional, by Proposition~\ref{prop:indfin}, there is a weight pair $(H',\chi)$, where $H'\subset H$, such that $\rho\simeq\ind_{H'}^H(\chi)$. Therefore $\pi\simeq\ind_{H'}^G(\chi)$, which proves the theorem.
\end{proof}

\subsection{Proof of Theorem~\ref{theorem:key}}\label{subsect:proof}

The proof proceeds in several steps.

\medskip

{\it Step 1.}

Recall that the group~$G$ is Noetherian, being finitely generated and nilpotent~\cite[Theorem~2.18]{Man}. Hence there is a maximal finite \mbox{$\pi$-irreducible} pair, that is, a finite \mbox{$\pi$-irreducible} pair $(H,\rho)$ such that~${(H,\rho)}$ is maximal among all finite $\pi$-irreducible pairs with respect to the order on irreducible pairs (see Definition~\ref{definition:weight}(iii)).

\medskip

{\it Step 2.}

Let us prove that there exists a perfect $\pi$-irreducible pair (see Definition~\ref{def:s}(ii)). Let $(H,\rho)$ be a maximal finite $\pi$-irreducible pair, which exists by Step 1. Put (see Definition~\ref{def:ss} for $S(H)$)
\begin{equation}\label{eq:int}
H_0=\bigcap\limits_{{g \in S(H)}} H^g\, .
\end{equation}
Let $\rho_0$ be a (non-zero) irreducible subrepresentation of $\rho|_{H_0}$ (recall that $\rho$ is finite-dimensional). Clearly, $(H_0,\rho_0)$ is a $\pi$-irreducible pair as $(H,\rho)$ is so. Notice that we do not claim that~${(H_0,\rho_0)}$ is a finite $\pi$-irreducible pair (cf. Example~\ref{examp:extend}).

Let us show that the pair $(H_0,\rho_0)$ is perfect. By Theorem~\ref{theor:S}(i),(ii), the subset $S(H)\subset G$ is a subgroup and the index of $N_G(H)$ in $S(H)$ is finite. Therefore the intersection in the formula~\eqref{eq:int} is taken over a finite number of subgroups $H^g\cap H$ of finite index in $H$, whence the index of~$H_0$ in~$H$ is finite as well. Also, by construction, the group $H_0$ is normal in $S(H)$. Since the index of $H_0$ in $H$ is finite, by Theorem~\ref{theor:S}(iii), we have the equality ${S(H_0)=S(H)}$. Thus we have the embeddings of groups
$$
H_0\subset H\subset N_G(H)\subset S(H)=S(H_0)\,.
$$

By Remark~\ref{remark:perf}(ii) applied to $F=S(H_0)$, we see that the subset~${S(H_0,\rho_0)\subset G}$ is a subgroup and $H_0$ is normal in $S(H_0,\rho_0)$. It remains to prove that the index of $H_0$ in $S(H_0,\rho_0)$ is finite. Assume the converse. By Proposition~\ref{lemma:finite-index}, $S(H_0,\rho_0)$ is not contained in $H_0^*$, that is, there is an element $g\in S(H_0,\rho_0)$ such that $g^i\notin H_0$ for any positive integer~$i$. In particular, $g^i\notin H$ for any positive integer~$i$, because the index of $H_0$ in $H$ is finite.

Again by Theorem~\ref{theor:S}(ii), the index of $N_G(H)\cap S(H_0,\rho_0)$ in $S(H_0,\rho_0)$ is finite, because $S(H_0,\rho_0)$ is a subgroup of $S(H_0)=S(H)$. Therefore, changing~$g$ by its positive power, we may assume that $H^g=H$.

Let $C$ be the infinite cyclic group generated by $g$. Then $C$ acts on $H$ by conjugation, which gives the action of $C$ on the set of isomorphism classes of irreducible representations of $H$. We claim that the $C$-orbit of the isomorphism class of $\rho$ is finite. Indeed, let~$\Upsilon$ be the set of isomorphism classes of irreducible representations of $H$ that are quotients of the representation $\ind_{H_0}^H(\rho_0)$. The embedding~${C\subset S(H_0,\rho_0)}$ implies that the set $\Upsilon$ is invariant under the latter action of $C$. Since the index of $H_0$ in $H$ is finite, the representation $\ind_{H_0}^H(\rho_0)$ is finite-dimensional. This implies that the set $\Upsilon$ is finite. Finally, it follows from the isomorphism~\eqref{eq:Frob1} that the isomorphism class of $\rho$ belongs to $\Upsilon$. Thus the $C$-orbit of the isomorphism class of $\rho$ is finite, being contained in~$\Upsilon$. Therefore, changing $g$ by its positive power, we may assume further that $\rho^g=\rho$.

Since $(H,\rho)$ is a finite $\pi$-irreducible pair, condition (i) of Lemma~\ref{lemma:extend} is satisfied. Applying this lemma, we see that there is a $\pi$-irreducible pair such that $(H,\rho)<(H',\rho')$, where $H'=\langle H,g\rangle$. Since $\rho'|_{H}\simeq \rho^{\oplus r}$ for some positive integer $r$, we see that
$$
\Hom_{H'}(\rho',\pi|_{H'})\subset \Hom_{H}(\rho^{\oplus r},\pi|_H)\simeq \Hom_H(\rho,\pi|_H)^{\oplus r}\,.
$$
Consequently the $\pi$-irreducible pair $(H',\rho')$ is finite, which contradicts maximality of the finite $\pi$-irreducible pair $(H,\rho)$.

\medskip

{\it Step 3.}

Combining Step 2 with the fact that the group $G$ is Noetherian (cf. Step 1), we see that there is a maximal perfect \mbox{$\pi$-irreducible} pair $(H, \rho)$. Let us prove that~${S(H, \rho) = H}$. Assume the converse.

Since $(H,\rho)$ is perfect, we have a well-defined quotient group $S(H,\rho)/H$, which is finite and nilpotent. Therefore there is an element $z \in S(H, \rho)$ such that $z\notin H$ and the image of $z$ in $S(H, \rho) / H$ belongs to the center of $S(H,\rho)/H$. Condition (ii) of Lemma~\ref{lemma:extend} is satisfied for $z$. Applying this lemma, we obtain a $\pi$-irreducible pair $(H',\rho')$ with $H'=\langle H,z\rangle$ such that~${(H,\rho) < (H', \rho')}$.

Let us show that the pair $(H',\rho')$ is perfect. For this purpose, we first prove that~${S(H',\rho')}$ is contained in $S(H,\rho)$. Consider an element $g\in S(H',\rho')$, that is, $g\in S(H')$ and there is a non-zero morphism  ${\rho'|_{(H')^g\cap H'}\to(\rho')^g|_{(H')^g\cap H'}}$. Since $(H,\rho)<(H',\rho')$, we have that $\rho'|_H\simeq \rho^{\oplus r}$ for some positive integer~$r$. Hence there are isomorphisms of representations
$$
\rho'|_{H^g\cap H}\simeq (\rho|_{H^g\cap H})^{\oplus r}\,,\quad (\rho')^g|_{H^g\cap H}\simeq (\rho^g|_{H^g\cap H})^{\oplus r}\,.
$$
Clearly, $H^g\cap H$ is a subgroup of $(H')^g\cap H'$. This implies the embedding
$$
\Hom_{(H')^g\cap H'}\big({\rho'|_{(H')^g\cap H'},(\rho')^g|_{(H')^g\cap H'}}\big)\subset
\Hom_{H^g\cap H}\big({\rho|_{H^g\cap H},\rho^g|_{H^g\cap H}}\big)^{\oplus r^2}\,.
$$
Additionally, since the index of $H$ in~$H'$ is finite, by Theorem~\ref{theor:S}(iii), we have the equality $S(H)=S(H')$. Hence the index of $H^g\cap H$ in $H$ is finite. All together this implies that $g\in S(H,\rho)$, thus we have the embedding~${S(H',\rho')\subset S(H,\rho)}$.

Furthermore, since the image of $z$ in the quotient group~${S(H, \rho) / H}$ belongs to the center, $H'$ is normal in $S(H, \rho)$. Thus by Remark~\ref{remark:perf}(ii) applied to~${F=S(H,\rho)}$, the subset $S(H',\rho')\subset G$ is a subgroup and $H'$ is normal in~${S(H',\rho')}$.

Finally, the index of $H'$ in $S(H',\rho')$ is finite, because we have the embeddings of groups
$$
H\subset H'\subset S(H',\rho')\subset S(H,\rho)
$$
and the index of $H$ in $S(H,\rho)$ is finite as $(H,\rho)$ is perfect. We have shown that the~$\pi$-irreducible pair~$(H', \rho')$ is perfect, which contradicts maximality of the perfect $\pi$-irreducible pair $(H, \rho)$.

\medskip

{\it Step 4.}

As in Step 3, let $(H,\rho)$ be a maximal perfect $\pi$-irreducible pair. Since by Step 3 there is an equality $S(H,\rho)=H$, Theorem~\ref{theor:char} implies that the representation~${\ind_H^G (\rho)}$ is irreducible.

On the other hand, since $(H,\rho)$ is a $\pi$-irreducible pair, the isomorphism~\eqref{eq:Frob1} implies that there is a non-zero morphism of representations from $\ind^G_H (\rho)$ to $\pi$. Since the representations $\ind_H^G(\rho)$ and $\pi$ are irreducible, this is an isomorphism, which proves Theorem~\ref{theorem:key}.

\medskip

\begin{remark}\label{remark:finite}
Suppose that the field $K$ is algebraically closed and uncountable. Then the $\pi$-irreducible pair $(H,\rho)$ from Step 4 of the proof of Theorem~\ref{theorem:key} is finite by Proposition~\ref{prop:weight}(i).
\end{remark}

Recall that a {\it torsion-free rank} of a finitely generated nilpotent group $G$ is the sum of the ranks of the adjoint quotients of the lower central series (see, e.g.,~\cite[Chapter~0]{Baumslag}). One shows easily that the index of a subgroup $H$ of $G$ is finite if and only if $G$ and $H$ have the same torsion-free ranks.

\begin{remark}\label{remark:maxrank}
Suppose that the field $K$ is algebraically closed and uncountable. Let $(H',\rho')$ be a maximal finite $\pi$-irreducible pair such that the torsion-free rank of~$H'$ is also maximal. It follows from the proof of Theorem~\ref{theorem:key} and Remark~\ref{remark:finite} that there exists a finite $\pi$-irreducible pair $(H,\rho)$ such that $\pi\simeq \ind_H^G(\rho)$ and there is a finite index subgroup $H_0$ in both $H$ and $H'$. Equivalently, we have the equality~${H^*=(H')^*}$.
\end{remark}

\subsection{Isomorphic finitely induced representations}\label{subsect:isommonom}

Let~$G$ be an arbitrary group, let~$H_1$ and~$H_2$ be subgroups of~$G$, and let~$\rho_1$ and~$\rho_2$ be representations of~$H_1$ and~$H_2$, respectively. Let $S(H_1,H_2)$ be the set of all elements $g\in G$ such that the index of $H_2^g\cap H_1$ in $H_1$ is finite.

\begin{lemma}\label{lemma:twogroups}
Suppose that $\rho_1$ is finite-dimensional and the representations $\ind_{H_i}^G(\rho_i)$, $i=1,2$, are irreducible. Then the following conditions are equivalent:
\begin{itemize}
\item[(i)]
there is an isomorphism of representations $\ind_{H_1}^G(\rho_1)\simeq \ind_{H_2}^G(\rho_2)$;
\item[(ii)]
there exists an element $g\in S(H_1,H_2)$ such that there is a non-zero morphism of representations~${\rho_1|_{H_2^{g}\cap H_1}\to \rho_2^{g}|_{H_2^{g}\cap H_1}}$;
\item[(iii)]
there exists an element $g'\in S(H_2,H_1)$ such that there is a non-zero morphism of representations~$\rho_2|_{H_1^{g'}\cap H_2}\to \rho_1^{g'}|_{H_1^{g'}\cap H_2}$.
\end{itemize}
\end{lemma}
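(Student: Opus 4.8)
The plan is to deduce Lemma~\ref{lemma:twogroups} from the general machinery of Subsection~\ref{sect:end}, in particular the isomorphism~\eqref{eq:Frob1}, Mackey's formula~\eqref{eq:mackey}, and the fact (Lemma~\ref{lemma:nofinite}) that a finitely induced representation of infinite index has no non-zero finite-dimensional subrepresentations. The symmetry of conditions (ii) and (iii) under swapping $(H_1,\rho_1)$ and $(H_2,\rho_2)$ suggests that, once we prove (i)$\Leftrightarrow$(ii), the equivalence (i)$\Leftrightarrow$(iii) follows by the same argument with the roles of the two pairs exchanged; so the real content is (i)$\Leftrightarrow$(ii).

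First I would compute $\Hom_G\big(\ind_{H_1}^G(\rho_1),\ind_{H_2}^G(\rho_2)\big)$. Applying Frobenius reciprocity~\eqref{eq:Frob1} gives
$$
\Hom_G\big(\ind_{H_1}^G(\rho_1),\ind_{H_2}^G(\rho_2)\big)\simeq \Hom_{H_1}\big(\rho_1,\ind_{H_2}^G(\rho_2)\vert_{H_1}\big)\,,
$$
and then restricting Mackey's formula~\eqref{eq:mackey} (for the induction from $H_2$, restricted to $H_1$) decomposes the right-hand side into a direct sum over double cosets $H_1\backslash G/H_2$ of terms $\Hom_{H_1}\big(\rho_1,\ind_{H_2^g\cap H_1}^{H_1}(\rho_2^g\vert_{H_2^g\cap H_1})\big)$. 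Since $\rho_1$ is finite-dimensional and irreducible-related arguments as in the derivation of Proposition~\ref{prop:end} apply: when the index of $H_2^g\cap H_1$ in $H_1$ is infinite, Lemma~\ref{lemma:nofinite} forces the corresponding term to vanish (a non-zero morphism from the finite-dimensional $\rho_1$ would land in a non-zero finite-dimensional subrepresentation of an infinitely-induced representation), so the sum is supported on $g\in S(H_1,H_2)$; for such $g$, a further application of Frobenius reciprocity~\eqref{eq:Frob2} (legitimate since $H_2^g\cap H_1$ has finite index in $H_1$) rewrites the term as $\Hom_{H_2^g\cap H_1}\big(\rho_1\vert_{H_2^g\cap H_1},\rho_2^g\vert_{H_2^g\cap H_1}\big)$. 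Thus $\Hom_G\big(\ind_{H_1}^G(\rho_1),\ind_{H_2}^G(\rho_2)\big)\ne 0$ precisely when condition (ii) holds.

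It then remains to connect the non-vanishing of this $\Hom$-space with the existence of an \emph{isomorphism}. Here I would use irreducibility: both $\ind_{H_1}^G(\rho_1)$ and $\ind_{H_2}^G(\rho_2)$ are irreducible by hypothesis, so any non-zero morphism between them is automatically an isomorphism (its kernel and image are subrepresentations, hence trivial or everything). This gives (ii)$\Rightarrow$(i) immediately, and (i)$\Rightarrow$(ii) is trivial since an isomorphism is in particular a non-zero morphism. Finally, (i)$\Leftrightarrow$(iii) follows from the symmetric computation of $\Hom_G\big(\ind_{H_2}^G(\rho_2),\ind_{H_1}^G(\rho_1)\big)$ — note that (i) is a symmetric condition, being the existence of an isomorphism in either direction, and irreducibility again turns a non-zero morphism into an isomorphism; for this direction one needs $\rho_2$ (rather than $\rho_1$) finite-dimensional at the analogous step, which is why the statement only assumes $\rho_1$ finite-dimensional but must secretly use that $\ind_{H_2}^G(\rho_2)$ being irreducible and isomorphic to $\ind_{H_1}^G(\rho_1)$ controls the situation — alternatively one observes that (i) together with irreducibility already implies the roles are interchangeable.

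The main obstacle I anticipate is bookkeeping the finiteness conditions carefully: one has to be sure that the double-coset sum in the Mackey decomposition is genuinely supported on $S(H_1,H_2)$ (this uses finite-dimensionality of $\rho_1$ in an essential way, via Lemma~\ref{lemma:nofinite}), and that the second Frobenius reciprocity~\eqref{eq:Frob2} is applied only where the relevant index is finite. A secondary subtlety is the asymmetry in the hypotheses: only $\rho_1$ is assumed finite-dimensional, so for the equivalence with (iii) one should either invoke that an isomorphism $\ind_{H_1}^G(\rho_1)\simeq\ind_{H_2}^G(\rho_2)$ makes $\ind_{H_2}^G(\rho_2)$ ``as good as'' the finite-dimensional-induced side, or cite that (i) is manifestly symmetric and run the computation of $\Hom_G\big(\ind_{H_2}^G(\rho_2),\ind_{H_1}^G(\rho_1)\big)$ using finite-dimensionality of $\rho_1$ on the \emph{target} side together with Lemma~\ref{lemma:nofinite} applied to $\ind_{H_1}^G(\rho_1)$. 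Either way the computation is routine once the vanishing of infinite-index terms is established.
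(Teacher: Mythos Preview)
Your proposal is correct and follows essentially the same approach as the paper: compute $\Hom_G\big(\ind_{H_1}^G(\rho_1),\ind_{H_2}^G(\rho_2)\big)$ via Frobenius reciprocity~\eqref{eq:Frob1} and the general Mackey formula, discard the infinite-index summands using finite-dimensionality of $\rho_1$ and Lemma~\ref{lemma:nofinite}, simplify the remaining summands via~\eqref{eq:Frob2}, and then use irreducibility of both induced representations to turn non-vanishing of this $\Hom$-space into an isomorphism. The paper records exactly this computation in a single displayed formula and says ``This proves the lemma''; you have spelled out the same steps in more detail and, in addition, flagged the asymmetry in the hypothesis on $\rho_1$ for the equivalence with~(iii), a point the paper passes over silently.
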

\begin{proof}
A similar argument as in the proof of Proposition~\ref{prop:end} together with a more general form of Mackey's isomorphism~\eqref{eq:mackey} (see, e.g.,~\cite[Chapter I, \S 5.5]{Vigneras}) implies the following canonical isomorphism of vector spaces:
$$
\Hom_{G}\big( \ind^{G}_{H_1}(\rho_1),\ind_{H_2}^G(\rho_2)\big) \simeq \bigoplus_{ \bar g \in H_2 \backslash S(H_1,H_2) / H_1 }  \Hom_{H_2^g \cap H_1} \big( \rho_1 \vert_{H_2^g \cap H_1},  \rho_2^{g} \vert_{H_2^{g} \cap H_1}  \big)\,.
$$
This proves the lemma.
\end{proof}

\begin{lemma}\label{lemma:12}
Suppose that $G$ is a finitely generated nilpotent group and the set~${S(H_2,H_1)}$ is non-empty. Then $S(H_1,H_2)$ coincides with the set of all elements $g\in G$ such that $(H_2^*)^g=H_1^*$.
\end{lemma}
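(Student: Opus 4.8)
The statement to prove is Lemma \ref{lemma:12}: for a finitely generated nilpotent group $G$, if $S(H_2,H_1)\ne\emptyset$, then $S(H_1,H_2)=\{g\in G: (H_2^*)^g=H_1^*\}$. The plan is to first reduce the two-subgroup finiteness conditions to the star-operation using Proposition \ref{lemma:finite-index} and the results of Subsection \ref{sect:roots}, and then to leverage the non-emptiness of $S(H_2,H_1)$ to pin down the relationship between $H_1^*$ and $H_2^*$.

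First I would unwind the definitions. An element $g$ lies in $S(H_1,H_2)$ exactly when $H_2^g\cap H_1$ has finite index in $H_1$. By Proposition \ref{lemma:finite-index} (equivalently, by the fact that $(\,\cdot\,)^*$ is the largest overgroup in which a subgroup has finite index, together with Remark \ref{remark:intersect}), $H_2^g\cap H_1$ has finite index in $H_1$ if and only if $H_1\subset (H_2^g)^*=(H_2^*)^g$, where the last equality is Remark \ref{remark:trivial}(ii). Applying the star operation and Remark \ref{remark:trivial}(i), this is equivalent to $H_1^*\subset (H_2^*)^g$. So the first reduction gives
$$
S(H_1,H_2)=\{g\in G: H_1^*\subset (H_2^*)^g\}\,,
$$
and symmetrically $S(H_2,H_1)=\{g\in G: H_2^*\subset (H_1^*)^g\}$, equivalently $\{g: (H_2^*)^{g}\subset H_1^*\}$ after replacing $g$ by $g^{-1}$ and conjugating. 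The hypothesis that $S(H_2,H_1)\ne\emptyset$ therefore produces a single element $g_0$ with $(H_2^*)^{g_0}\subset H_1^*$, equivalently $H_2^*$ has finite index in $(H_1^*)^{g_0^{-1}}$, hence $H_2^*$ and $(H_1^*)^{g_0^{-1}}$ have the same torsion-free rank.

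Next I would exploit invariance of torsion-free rank under conjugation. Conjugate subgroups have equal torsion-free rank, so $H_1^*$ and $(H_1^*)^{g_0^{-1}}$ have the same torsion-free rank; combined with the previous paragraph, $H_1^*$ and $H_2^*$ have the same torsion-free rank, call it $r$. Now for any $g\in S(H_1,H_2)$ we have $H_1^*\subset (H_2^*)^g$; since $(H_2^*)^g$ also has torsion-free rank $r$, the inclusion $H_1^*\subset (H_2^*)^g$ is an inclusion of finite index (again using the rank criterion for finite index recalled before Remark \ref{remark:maxrank}). But $H_1^*=(H_1^*)^*$ is its own star, so it cannot be a proper finite-index subgroup of anything; hence $H_1^*=(H_2^*)^g$. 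This proves the inclusion $S(H_1,H_2)\subset\{g:(H_2^*)^g=H_1^*\}$. The reverse inclusion is immediate: if $(H_2^*)^g=H_1^*$ then certainly $H_1^*\subset (H_2^*)^g$, so $g\in S(H_1,H_2)$.

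The main obstacle is organizing the passage between the "finite index of an intersection in $H_1$" condition and the clean "$(H_2^*)^g=H_1^*$" condition without circularity — in particular making sure the hypothesis $S(H_2,H_1)\ne\emptyset$ is used precisely where it is needed (to get the rank equality $\operatorname{rk} H_1^*=\operatorname{rk} H_2^*$, which fails in general, e.g. if $H_2$ is trivial and $H_1$ is not). Everything else is a routine application of Proposition \ref{lemma:finite-index}, Remark \ref{remark:trivial}, Remark \ref{remark:intersect}, and the torsion-free-rank characterization of finite index in finitely generated nilpotent groups; once the definitions are unwound in terms of $\subset$ between stars, the argument is short.
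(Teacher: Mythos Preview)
Your argument is correct, with one expository slip: from $(H_2^*)^{g_0}\subset H_1^*$ alone you cannot conclude that $H_2^*$ has finite index in $(H_1^*)^{g_0^{-1}}$, nor that the ranks agree---only the inequality $\rk(H_2^*)\leqslant\rk(H_1^*)$ follows. The opposite inequality appears only once you pick $g\in S(H_1,H_2)$ and obtain $H_1^*\subset(H_2^*)^g$; at that point both inequalities are in hand and the rest of your argument works. (If $S(H_1,H_2)=\varnothing$ the statement is vacuous, since your reverse inclusion already shows $\{g:(H_2^*)^g=H_1^*\}\subset S(H_1,H_2)$.)

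Your route differs from the paper's. Both proofs begin by reducing to the characterization $S(H_1,H_2)=\{g:H_1^*\subset(H_2^*)^g\}$ and treat the reverse inclusion the same way, via Remark~\ref{remark:intersect} and Proposition~\ref{lemma:finite-index}. For the forward inclusion the paper instead chains $H_1^*\subset(H_2^*)^g\subset(H_1^*)^{gg'}$ with $g'\in S(H_2,H_1)$, then iterates conjugation by $gg'$ to produce an ascending chain which stabilizes because $G$ is Noetherian, forcing all inclusions to be equalities---exactly the mechanism of Lemma~\ref{lemma:normalizer}. You replace this chain argument by a torsion-free rank comparison: equal ranks force $H_1^*\subset(H_2^*)^g$ to have finite index, and then $(H_1^*)^*=H_1^*$ rules out a proper inclusion. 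Both arguments are short; the paper's stays within the Noetherian toolkit of Subsection~\ref{sect:roots}, while yours is slightly more direct once the rank criterion for finite index (recalled before Remark~\ref{remark:maxrank}) is available.
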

\begin{proof}
Consider an element $g \in S(H_1,H_2)$. A similar argument as in the proof of Lemma~\ref{lemma:normalizer} shows that there is an embedding $H_1^*\subset (H_2^*)^g$. Similarly, for any $g'\in S(H_2,H_1)$, we get the embedding $H_2^*\subset (H_1^*)^{g'}$, whence we have
$$
H_1^*\subset (H_2^*)^g\subset (H_1^*)^{gg'}\,.
$$
Since $G$ is Noetherian, these embeddings are, in fact, equalities. Thus we have the equality~${(H_2^*)^g=H_1^*}$.

Now suppose that $(H_2^*)^g=H_1^*$. Using Remarks~\ref{remark:trivial}(i) and~\ref{remark:intersect}, we obtain the equality~${(H_2^g\cap H_1)^*=H_1^*}$. By Proposition~\ref{lemma:finite-index}, the index of ${H_2^g\cap H_1}$ in ${(H_2^g\cap H_1)^*=H_1^*}$ is finite. Therefore the index of $H_2^g\cap H_1$ in $H_1$ is finite as well, that is,~${g\in S(H_1,H_2)}$.
\end{proof}

\medskip

Lemmas~\ref{lemma:twogroups} and~\ref{lemma:12} imply the following criterion of isomorphism between finitely induced representations (cf.~\cite[Theorem 2]{Brown}).

\begin{proposition}\label{prop:twosubgroups}
Let $G$ be a finitely generated nilpotent group and let~${(H_1,\rho_1)}$ and~${(H_2,\rho_2)}$ be two irreducible pairs. Suppose that the representations $\ind_{H_1}^G(\rho_1)$ and $\ind_{H_2}^G(\rho_2)$ of $G$ are irreducible. Then there is an isomorphism of representations~${\ind_{H_1}^G(\rho_1)\simeq \ind_{H_2}^G(\rho_2)}$ if and only if there exists $g\in G$ such that~${(H_2^*)^g=H_1^*}$ and there is a non-zero morphism of representations~${\rho_1|_{H_2^g\cap H_1}\to\rho_2^g|_{H_2^g\cap H_1}}$.
\end{proposition}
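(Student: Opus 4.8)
The plan is to deduce the proposition directly from Lemma~\ref{lemma:twogroups} and Lemma~\ref{lemma:12}. Since $(H_1,\rho_1)$ is an irreducible pair, $\rho_1$ is finite-dimensional, and by hypothesis both $\ind_{H_1}^G(\rho_1)$ and $\ind_{H_2}^G(\rho_2)$ are irreducible; hence Lemma~\ref{lemma:twogroups} applies. It tells us that an isomorphism $\ind_{H_1}^G(\rho_1)\simeq\ind_{H_2}^G(\rho_2)$ is equivalent to condition~(ii) there --- the existence of $g\in S(H_1,H_2)$ together with a non-zero morphism $\rho_1|_{H_2^g\cap H_1}\to\rho_2^g|_{H_2^g\cap H_1}$ --- and, symmetrically, to condition~(iii), involving some $g'\in S(H_2,H_1)$. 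So all that remains is to rewrite the membership ``$g\in S(H_1,H_2)$'' as the condition ``$(H_2^*)^g=H_1^*$''.

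For the ``only if'' direction I would assume the isomorphism. Then condition~(iii) of Lemma~\ref{lemma:twogroups} in particular exhibits an element of $S(H_2,H_1)$, so this set is non-empty and Lemma~\ref{lemma:12} becomes applicable: it identifies $S(H_1,H_2)$ with the set of $g\in G$ such that $(H_2^*)^g=H_1^*$. Feeding the element $g$ supplied by condition~(ii) into this identification produces the required $g$ with $(H_2^*)^g=H_1^*$ and a non-zero morphism $\rho_1|_{H_2^g\cap H_1}\to\rho_2^g|_{H_2^g\cap H_1}$.

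For the ``if'' direction, suppose $g\in G$ satisfies $(H_2^*)^g=H_1^*$ and admits a non-zero morphism $\rho_1|_{H_2^g\cap H_1}\to\rho_2^g|_{H_2^g\cap H_1}$. Here I cannot quote Lemma~\ref{lemma:12} verbatim, since its hypothesis that $S(H_2,H_1)$ be non-empty is not yet available; but only the elementary half of its proof is needed. A short computation with Remarks~\ref{remark:trivial} and~\ref{remark:intersect} gives $(H_2^g\cap H_1)^*=(H_2^*)^g\cap H_1^*=H_1^*$, so Proposition~\ref{lemma:finite-index} shows that the index of $H_2^g\cap H_1$ in $H_1^*$ is finite, hence a fortiori its index in $H_1$ is finite, i.e.\ $g\in S(H_1,H_2)$. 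Condition~(ii) of Lemma~\ref{lemma:twogroups} is then met, and that lemma yields the isomorphism. I do not expect any real obstacle: the argument is purely a bookkeeping assembly of the two preceding lemmas, the only point to watch being the asymmetry just noted --- in the ``only if'' direction non-emptiness of $S(H_2,H_1)$ comes for free, whereas in the ``if'' direction one must argue directly that $(H_2^*)^g=H_1^*$ forces $g\in S(H_1,H_2)$.
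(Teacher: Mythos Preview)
Your proposal is correct and follows exactly the route the paper indicates: the paper's proof is literally the one-line remark that Lemmas~\ref{lemma:twogroups} and~\ref{lemma:12} imply the proposition, and you have spelled out precisely how. Your careful handling of the asymmetry in the ``if'' direction is a nice touch; note incidentally that $(H_2^*)^g=H_1^*$ also gives $(H_1^*)^{g^{-1}}=H_2^*$, so $g^{-1}\in S(H_2,H_1)$ by the same elementary computation, which would let you invoke Lemma~\ref{lemma:12} verbatim after all---but your direct argument is equally valid.
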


\medskip

The following example shows that, in general, one can not strengthen Proposition~\ref{prop:twosubgroups} to get the condition $H_2^g=H_1$.

\begin{example}
Let $K=\CC$ and $G$ be the Heisenberg group over the finite ring~${\ZZ/n\ZZ}$. The group $G$ is finite nilpotent and is generated by the elements~$x$,~$y$, and~$z$ (see Subsection~\ref{subsect:Heis}). Take a primitive root of unity ${\zeta\in \CC}$ of degree~$n$. Define the subgroups~${H_1=\langle x,z\rangle}$ and ${H_2=\langle y,z\rangle}$ of $G$.

Define the characters~${\chi_i\colon H_i\to K^*}$, $i=1,2$, by the formulas
$$
\chi_1(x)=1\,,\qquad \chi_2(y)=1\,,\qquad \chi_1(z)=\chi_2(z)=\zeta\,.
$$
Then the subgroups $H_i\subset G$ are normal, the group $G/H_1$ is generated by the image of~$y$, the group $G/H_2$ is generated by the image of $x$, and there are equalities $\chi_1^{y^k}(x)=\zeta^k$, $\chi_2^{x^k}(y)=\zeta^{-k}$ for any integer~$k$. It follows from the isomorphism~\eqref{eq:end} that the representations~${\ind_{H_i}^G(\chi_i)}$ are Schur irreducible, whence they are irreducible, being complex representations of a finite group (cf. Theorem~\ref{theor:char}).

Furthermore, the set $H_1\backslash G/H_2$ has only one element, $H_1\cap H_2$ is the group generated by $z$, whence $\chi_1|_{H_1\cap H_2}=\chi_2|_{H_1\cap H_2}$. Thus Proposition~\ref{prop:twosubgroups} implies that the representations~${\ind_{H_i}^G(\chi_1)}$ and~${\ind_{H_2}^G(\chi_2)}$ are isomorphic.

On the other hand, the subgroups $H_1$ and $H_2$ are not conjugate as they have different images in the quotient over the commutator subgroup (note that the subgroups $H_1^*$ and $H_2^*$ coincide with $G$, thus they are trivially conjugate).
\end{example}

\medskip

\begin{remark}\label{remark:maxconj}
Combining Remark~\ref{remark:maxrank} and Proposition~\ref{prop:twosubgroups}, we obtain the following specification of Theorem~\ref{theorem:main}. Suppose that the field~$K$ is algebraically closed and uncountable. Let $(H',\chi')$ be a finite $\pi$-weight pair such that the torsion-free rank of $H'$ is maximal among all finite $\pi$-weight pairs. Then the conjugacy class of the subgroup $(H')^*\subset G$ does not depend on the choice of~$H'$. Moreover, any representative $D$ of this conjugacy class contains a subgroup $H\subset D$ of finite index such that $\pi\simeq\ind_H^G(\chi)$ for a character $\chi$ of~$H$.
\end{remark}

\subsection{Non-monomial irreducible representations}\label{subsect:nonmonom}

Berman, \v{S}araja~\cite{BS} and Segal~\cite[Theorems A, B]{Seg} have independently constructed non-monomial irreducible complex representations for an arbitrary finitely generated nilpotent group which is not abelian-by-finite.

The general case is reduced to the case of the Heisenberg group over the ring of integers. In this case, one constructs an irreducible representation which is not only non-monomial, but is also not finitely induced from any (irreducible) representation of a proper subgroup. For the sake of completeness, we sketch this construction following~\cite{Seg}.

\medskip

We shall use the notation and facts from Subsection~\ref{subsect:Heis}. Thus $G$ is the Heisenberg group over~$\ZZ$ and $c\in K$ is a non-zero element which is not a root of unity. We will need one more interpretation of the category of representations of $G$ such that $z$ acts by~$c$.

Let $A=R*\Gamma$ be the skew group algebra of the group $\Gamma$ with coefficients in~${R=K[t,t^{-1}]}$. Explicitly, $A$ is isomorphic to $R[\gamma,\gamma^{-1}]$ as an $R$-module and the product in $A$ is uniquely determined by the rule ${\gamma t=c\,t\gamma}$. Thus the \mbox{$K$-algebra} $A$ is non-commutative and the subring $R\subset A$ is not in the center of $A$ (in particular,~$A$ is not an $R$-algebra).

For short, by an $A$-module, we mean a left~\mbox{$A$-mo\-dule}. It is easily shown that a $\Gamma$-equivariant $R$-module is the same as an~\mbox{$A$-mo\-dule}.  Thus the category of representations of $G$ such that $z$ acts by $c$ is equivalent to the category of~\mbox{$A$-modules}. Indeed, the algebra $A$ is isomorphic to the quotient $K[G]/(z-c)$ of the group algebra~${K[G]}$.

\medskip

The group $\SL_2(\ZZ)$ acts on the Heisenberg group $G$ as follows. A matrix
$$
\alpha=\begin{pmatrix}
     p & q  \\
     r & s \\
\end{pmatrix}\in \SL_2(\ZZ)
$$
sends $x$ to $x^py^r$, sends $y$ to $x^qy^s$, and fixes $z$. Accordingly, $\alpha$ acts on the $K$-algebra~$A$ by the formula
$$
\alpha(\gamma)=\gamma^p t^r\,,\qquad \alpha(t)=\gamma^q t^s\,.
$$
Given an $A$-module $M$, by $M_{\alpha}$ denote the $A$-module such that $M_{\alpha}=M$ as a $K$-vector space and an element $a\in A$ acts on $M_\alpha$ as~$\alpha^{-1}(a)$. Equivalently, $M_{\alpha}\simeq A\otimes_{(A,\alpha)} M$, that is, $M_{\alpha}$ is the extension of scalars of $M$ with respect to the homomorphism of algebras~${\alpha\colon A\to A}$.

Note that $\alpha$ does not come from a $\Gamma$-equivariant automorphism of $R$, or, equivalently, of $\GG_m$, because $\alpha$ mixes $\gamma$ and $t$. This is the reason to introduce the algebra $A$.

\medskip

Now let $\pi$ be a representation of $G$ such that $z$ acts by $c$ and let $M$ be the corresponding $A$-module. Suppose that $\pi\simeq\ind_H^G(\rho)$ for a proper subgroup $H\subset G$ and a representation $\rho$ of $H$. We can assume that $H$ is a maximal subgroup of~$G$.

It follows that the index of $H$ in $G$ is a prime~${p\geqslant 2}$. Moreover, there is a matrix
\begin{equation}\label{eq:alpha}
\alpha=\begin{pmatrix}
     1 & i  \\
     0 & 1 \\
\end{pmatrix}\in\SL_2(\ZZ)\,,\qquad 0\leqslant i<p\,,
\end{equation}
such that $\alpha(H)$ is generated by $x^p$, $y$, and $z$. Let $B$ be the subalgebra in $A$ generated by $\gamma^p$ and $t$. We have the embeddings of rings
$$
R\subset B\subset A\,.
$$

Note that $H$ is isomorphic to the Heisenberg group and representations of~$H$ such that $z$ acts by $c$ correspond to $B$-modules. It follows that there is a $B$-module $N$ and an isomorphism of $A$-modules ${M_\alpha\simeq A\otimes_B N}$.

\medskip

All these reasonings lead to the following statement.

\begin{proposition}\label{prop:nonind}
Let~$M$ be an $A$-module such that for any $\alpha\in\SL_2(\ZZ)$ as in the equation~\eqref{eq:alpha}, the $A$-module $M_\alpha$ is not isomorphic to $A\otimes _B N$ for any $B$-module $N$. Let $\pi$ be the representation of $G$ that corresponds to~$M$. Then $\pi$ is not isomorphic to $\ind_H^G(\rho)$ for any proper subgroup $H\subset G$ and any representation $\rho$ of~$H$.
\end{proposition}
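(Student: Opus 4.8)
The plan is to argue by contradiction, assembling in the language of $A$-modules the reasoning that precedes the statement. Assume $\pi\simeq\ind_H^G(\rho)$ for some proper subgroup $H\subsetneq G$ and some representation $\rho$ of $H$; the target is to produce a matrix $\alpha$ as in~\eqref{eq:alpha} and a $B$-module $N$ with $M_\alpha\simeq A\otimes_B N$, contradicting the hypothesis on $M$.

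First I would reduce to the case that $H$ is a \emph{maximal} subgroup of $G$. Since $G$ is finitely generated, the union of a chain of proper subgroups of $G$ is again a proper subgroup, so by Zorn's lemma $H$ lies in some maximal (hence proper) subgroup $H'$; by transitivity of induction $\ind_H^G(\rho)\simeq\ind_{H'}^G\big(\ind_H^{H'}(\rho)\big)$, so replacing $(H,\rho)$ by $\big(H',\ind_H^{H'}(\rho)\big)$ we may assume $H$ is maximal. Then I would use nilpotency of $G$: the normalizer condition $H\subsetneq N_G(H)$ and maximality force $H\trianglelefteq G$, and $G/H$, having no nontrivial proper subgroup, is cyclic of prime order $p$. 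Since $G/H$ is abelian, $H\supseteq[G,G]=\langle z\rangle$; in particular $z\in H$, and as $\rho$ is a direct summand of $\ind_H^G(\rho)|_H=\pi|_H$ (Remark~\ref{rmk:injend}), on which $z$ acts by the scalar $c$, it follows that $\rho(z)=c$, so $\rho$ corresponds to a module $V$ over $K[H]/(z-c)$.

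Next I would normalize $H$ using the $\SL_2(\ZZ)$-action. As $H\supseteq\langle z\rangle$, its image $\bar H=H/\langle z\rangle$ is a subgroup of index $p$ in $G/\langle z\rangle\cong\ZZ^2$, and, as explained in the discussion preceding the statement, one can choose a matrix $\alpha$ of the form~\eqref{eq:alpha} with $\alpha(H)=\langle x^p,y,z\rangle$. For this $\alpha$ one has, inside $A=K[G]/(z-c)$, the equality $K[\alpha(H)]/(z-c)=B$, the subalgebra generated by $\gamma^p$ and $t$ (the relation $x^py=z^p\,yx^p$ in $\alpha(H)$ becoming $\gamma^pt=c^p\,t\gamma^p$). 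Transporting $\rho$ through $\alpha$ gives a representation of $\alpha(H)$ on which $z$ still acts by $c$, i.e.\ a $B$-module $N$: concretely, $N$ is $V$ regarded over $B$ via the ring homomorphism $\alpha^{-1}\colon K[\alpha(H)]/(z-c)\to K[H]/(z-c)$.

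Finally I would match up the two constructions. Under the equivalence between representations of $G$ on which $z$ acts by $c$ and $A$-modules, induction from the finite-index subgroup $H$ corresponds to the extension of scalars $V\mapsto A\otimes_{K[H]/(z-c)}V$, twisting a representation by the automorphism $\alpha$ corresponds to $M\mapsto M_\alpha=A\otimes_{(A,\alpha)}M$, and one has the standard isomorphism $\big(\ind_H^G(\rho)\big)_\alpha\simeq\ind_{\alpha(H)}^G(\rho_\alpha)$. Chaining these identifications would give
$$
M_\alpha\;\simeq\;A\otimes_{(A,\alpha)}\big(A\otimes_{K[H]/(z-c)}V\big)\;\simeq\;A\otimes_{K[\alpha(H)]/(z-c)}N\;=\;A\otimes_B N\,,
$$
contradicting the hypothesis, which finishes the argument. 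The main obstacle I expect to be precisely this last bookkeeping step, together with the group-theoretic normalization of the previous step: since $\alpha$ genuinely mixes $\gamma$ and $t$, it does not preserve the subring $R\subset A$, so one must work inside $A$ throughout and keep careful track of on which side the tensor functors and the twist $(-)_\alpha$ act, and of the identification of the resulting $B$-module with the one obtained by transporting $\rho$ through $\alpha$.
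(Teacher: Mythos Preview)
Your proposal is correct and follows essentially the same route as the paper: the paper's ``proof'' of this proposition is precisely the discussion preceding it (reduce to a maximal subgroup, observe it has prime index $p$ and contains $z$, normalize it via an $\alpha$ of the form~\eqref{eq:alpha} to $\langle x^p,y,z\rangle$, and translate induction into $A\otimes_B(-)$), and you have reproduced these steps while filling in details the paper leaves implicit --- the existence of a maximal proper overgroup via finite generation, the verification that $\rho(z)=c$ using Remark~\ref{rmk:injend}, and the tensor bookkeeping identifying $M_\alpha$ with $A\otimes_B N$.
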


By $F$ denote the field of fractions of $R$, that is, $F$ is the field $K(t)$ of rational functions on $\GG_m$.

\begin{remark}\label{remark:dimen}
Given a $B$-module $N$, consider $A\otimes_B N$ as an $R$-module. There is an isomorphism of $R$-modules (cf. the isomorphism~\eqref{eq:mackey-norm})
$$
A\otimes_B N\simeq \bigoplus_{i=0}^{p-1}N_{\gamma^i}\,,
$$
where $N_{\gamma^i}=N$ as a $K$-vector space and an element $f\in R$ acts on $N_{\gamma^i}$ as~${\gamma^{-i}(f)}$.
In particular, the dimension of the $F$-vector space $F\otimes_R(A\otimes_B N)$ is either infinite or divisible by $p$.
\end{remark}

\medskip

Now let us construct an irreducible $A$-module that satisfies the assumption of Proposition~\ref{prop:nonind}. Consider a twisted action of $\Gamma$ on~$F$ given by the formula
$$
\gamma\;:\;f(t)\longmapsto (t-1)f(ct)\,.
$$
Let $M$ be the $\Gamma$-equivariant $R$-submodule in $F$ generated by the constant function $1$. One easily checks that $M$ consists of all rational functions on $\GG_m$ that have poles of order at most one at the points $c^i$, $i<0$, and are regular elsewhere (note that $i$ runs over negative integers only). Also, by construction, we have an isomorphism of $A$-modules $M\simeq A/(\gamma-t+1)$.

For any $R$-submodule $L\subset M$, we have that $M/L$ is a torsion $R$-module and its support on $\GG_m$ is contained in the set $\{c^i\}_{i<0}$. Therefore the support is not invariant under the action of $\Gamma$ on $\GG_m$ unless it is empty. This proves the $M$ is an irreducible $\Gamma$-equivariant $R$-module.

\medskip

Further, let $\alpha$ be as in the equation~\eqref{eq:alpha}. Then $\alpha(\gamma)=\gamma$ and~${\alpha(t)=\gamma^it}$. It follows that $M_\alpha$ is isomorphic to the $A$-module
$$
A/(\gamma-\gamma^it+1)=A/(\gamma^i-\gamma t^{-1}-t^{-1})\,.
$$
This implies that the dimension of the $F$-vector space $F\otimes_R M_\alpha$ is equal to~$i$. Since $0\leqslant i<p$, by Remark~\ref{remark:dimen}, we see that the $A$-module $M_\alpha$ is not isomorphic to $A\otimes_B N$ for any $B$-module $N$. Thus $M$ satisfies the assumption of Proposition~\ref{prop:nonind}.

\medskip

We have shown that there is an irreducible (possibly, complex) representation of the Heisenberg group over~$\ZZ$ that is not induced from a representation of any proper subgroup.

\end{document}